\theoremstyle{definition}
\newtheorem{theorem}{Theorem}[section]
\newtheorem{definition}[theorem]{Definition}
\newtheorem{remark}[theorem]{Remark}
\newtheorem{example}[theorem]{Example}
\newtheorem{lemma}[theorem]{Lemma}
\newtheorem{proposition}[theorem]{Proposition}
\newtheorem{corollary}[theorem]{Corollary}
\newtheorem{question}[theorem]{Question}
\newcommand{\N}{\mathbb{N}}
\newcommand{\Z}{\mathbb{Z}}
\newcommand{\C}{\mathbb{C}}
\newcommand{\Q}{\mathbb{Q}}
\renewcommand{\P}{\mathcal{P}}
\newcommand{\A}{\mathcal{A}}
\newcommand{\B}{\mathcal{B}}
\newcommand{\MM}{\mathcal{M}}
\renewcommand{\H}{\mathcal{H}}
\newcommand{\donotbreak}[1]{{}$\kern-2\mathsurround${}
  \binoppenalty10000 \relpenalty10000 #1{}$\kern-2\mathsurround${}}
\DeclareMathOperator{\rk}{rk}
\DeclareMathOperator{\GL}{GL}
\DeclareMathOperator{\Id}{Id}
\DeclareMathOperator{\Tor}{Tor}
\newcommand{\M}{\mathsf{M}}
\newcommand{\<}{\langle}
\renewcommand{\>}{\rangle}
\DeclareMathOperator{\cl}{cl}
\DeclareMathOperator{\Aff}{Aff}
\DeclareMathOperator{\Stab}{Stab}
\DeclareMathOperator{\HNF}{HNF}
\DeclareMathOperator{\SHNF}{SHNF}
\DeclareMathOperator{\mmod}{\,mod\,}
\newcommand{\rvline}{\hspace*{-\arraycolsep}\vline\hspace*{-\arraycolsep}}
\newcommand{\blockmatrix}[4]{
	\begin{pmatrix}
		#1 & \rvline & #2 \\
		\hline
		#3 & \rvline & #4
	\end{pmatrix}
}
\title[Representations of torsion-free arithmetic matroids]{Representations of torsion-free\\ arithmetic matroids}
\author[R. Pagaria]{Roberto Pagaria}
\address{Roberto Pagaria}
\address{\textup{Scuola Normale Superiore (Pisa, Italy)}}
\email{roberto.pagaria@sns.it}
\address{Giovanni Paolini}
\author[G. Paolini]{Giovanni Paolini}
\address{\textup{University of Fribourg (Switzerland)}}
\email{giovanni.paolini@sns.it}
\begin{document}

\begin{abstract}
We study the representability problem for torsion-free arithmetic matroids.
By using a new operation called ``reduction'' and a ``signed Hermite normal form'', we provide and implement an algorithm to compute all the representations, up to equivalence.
As an application, we disprove two conjectures about the poset of layers and the independence poset of a toric arrangement.  
\end{abstract}

\maketitle

\setcounter{tocdepth}{1}
\tableofcontents

\section{Introduction}
Arithmetic matroids are a generalization of matroids, inspired by the combinatorics of finite lists of vectors in $\Z^r$.
Representations of arithmetic matroids come from many different contexts, such as:
arrangements of hypertori in an algebraic torus; vector partition functions; zonotopes \cite{deconcini2010topics, DAdderioMoci2013, BrandenMoci2014}.
However, not all arithmetic matroids admit a representation.
A natural question is to determine whether a given arithmetic matroid is representable, and characterize all possible representations.
In this work, we give such a characterization in the case of torsion-free arithmetic matroids (i.e.\ when the multiplicity of the empty set is one).
Our characterization is effective, and it yields an explicit algorithm to compute all representations.
We implemented this algorithm as part of a new Sage library to work with arithmetic matroids, called Arithmat \cite{arithmat}.

After recalling some definitions (\Cref{sec:preliminaries}), we introduce two concepts that are used later: the \emph{strong gcd property} (\Cref{sec:strong-gcd}), and a new operation on \mbox{(quasi-)arithmetic} matroids that we call \emph{reduction} (\Cref{sec:reduction}).
Roughly speaking, the strong gcd property requires the multiplicity function to be uniquely determined by the multiplicity of the bases.
The idea behind the reduction operation is the following: given a central toric arrangement, we can quotient the ambient torus by the subgroup of translations globally fixing the arrangement.
In the quotient, the equations of the initial arrangement describe a new toric arrangement.
The initial arrangement defines an arithmetic matroid and the final one defines its reduction.
Indeed, the reduction operation consistently changes the multiplicity function, so that the resulting \mbox{(quasi-)arithmetic} matroid is torsion-free and surjective (i.e.\ the multiplicity of the empty set and of the full groundset are both equal to one).

In \Cref{sec:representations} we dive into the representation problem for torsion-free arithmetic matroids, which is the heart of our work.
We start by considering the surjective case: following ideas of \cite{Lenz2017,Pagaria2017}, we show that there is at most one representation, and we describe how to compute it.
Then we turn to the general case of torsion-free arithmetic matroids.
Here the representation need not be unique, and we describe how to compute all representations.
A consequence of our algorithm is that a torsion-free arithmetic matroid $(E, \rk, m)$ of rank $r$ has at most $m(E)^{r-1}$ essential representations up to equivalence.

The problem of recognizing equivalent representations reduces to the computation of a normal form of integer matrices up to left-multiplication by invertible matrices and change of sign of the columns.
We tackle this problem in \Cref{sec:shnf}, where we describe a polynomial-time algorithm to compute such a normal form.
We call this the \emph{signed Hermite normal form}, by analogy with the classical Hermite normal form (which is a normal form up to left-multiplication by invertible matrices).
The signed Hermite normal form is also implemented in Arithmat \cite{arithmat}.

In \Cref{sec:decomposition} we tackle a related algorithmic problem, namely finding the decomposition of a represented arithmetic matroid as a sum of indecomposable ones.

Finally, in \Cref{sec:applications} we describe a few applications of our software library Arithmat.
We disprove two known conjectures about the poset of layers and the arithmetic independence poset of a toric arrangement: 
we exhibit an arithmetic matroid with $13$ non-equivalent representations (i.e.\ central toric arrangements), whose associated posets are not Cohen-Macaulay, and therefore not shellable.
As already noted in \cite{Pagaria2018}, the toric arrangements associated with a fixed arithmetic matroid can have different posets of layers (in the previous example, the $13$ toric arrangements give rise to $3$ different posets of layers).
We conclude with the following open question: is the arithmetic independence poset of a toric arrangement uniquely determined by the associated arithmetic matroid?

\subsection*{Acknowledgments}
We thank Alessio d'Alì, Emanuele Delucchi, and Ivan Martino for the useful discussions.
This work was supported by the Swiss National Science
Foundation Professorship grant PP00P2\_179110/1.

\section{Preliminaries}
\label{sec:preliminaries}

In this section, we recall the basic definitions and properties of arithmetic matroids.
The main references are \cite{OxleyBook, DAdderioMoci2013, BrandenMoci2014}.
We define a matroid in terms of its rank function.

\begin{definition}
  A \emph{matroid} is a pair $\mathcal M = (E, \rk)$, where $E$ is a finite set and $\rk \colon \P(E) \to \N$ is a function satisfying the following properties:
  \begin{enumerate}
    \item $\rk(X) \leq \lvert X \rvert$ for every $X \subseteq E$;
    \item $\rk(X\cup Y) + \rk(X \cap Y) \leq \rk(X) + \rk(Y)$ for every $X,Y \subseteq E$;
    \item $\rk(X) \leq \rk(X \cup \{e\}) \leq \rk(X)+1$ for every $X \subseteq E$ and $e \in E$.
  \end{enumerate}
\end{definition}

For every matroid $\mathcal M=(E,\rk)$ and for every subset $X \subseteq E$, we denote by $\mathcal M/X$ the \textit{contraction} of $X$ and by $\mathcal M \setminus X$ the \textit{deletion} of $X$ (see \cite[Section 1.3]{OxleyBook}).
For $X\subseteq E$, denote by $\cl(X)$ the maximal subset $Y\supseteq X$ of rank equal to $\rk(X)$.

Let us recall the definition of arithmetic matroids, introduced in \cite{DAdderioMoci2013,BrandenMoci2014}.

\begin{definition}
  \label{def:molecule}
  A \textit{molecule} $(X,Y)$ of a matroid $\mathcal M$ is a pair of sets $X\subset Y \subseteq E$ such that the matroid $(\mathcal{M}/X)\setminus Y^c$ has a unique basis.
  Equivalently, it is possible to write $Y = X \sqcup T \sqcup F$ in such a way that, for every $Z$ with $X \subseteq Z \subseteq Y$, we have $\rk(Z) = \rk(X) + |Z \cap F|$.
  Here $F = Y \setminus \cl(X)$ is the unique basis of $(M/X) \setminus Y^c$, and $T = \cl(X) \cap Y \setminus X$ is the set of loops of $(M/X) \setminus Y^c$.
\end{definition}

\begin{definition}
  \label{def:arithmetic-matroid}
  An \textit{arithmetic matroid} is a triple $M = (E,\rk,m)$, such that $(E,\rk)$ is a matroid and $m \colon \P(E) \to \N_+=\set{ 1,2,\dots}$ is a function satisfying:
  \begin{enumerate}
  \item[(A1)] for every $X \subseteq E$ and $e \in E$, if $\rk(X\cup \{e\}) = \rk(X)$ then $m(X \cup \{e\}) \mid m(X)$, otherwise $m(X) \mid m(X \cup \{e\})$;
  
  \smallskip
  \item[(A2)] \label{eq:cond_on_molecule} if $(X,Y)$ is a molecule, with $Y = X \sqcup T \sqcup F$ as in \Cref{def:molecule}, then
  \[ m(X) \, m(Y) = m(X \cup T) \, m(X \cup F); \]
  
  \item[(P)] if $(X,Y)$ is a molecule, then
  \[ \sum_{X \subseteq S \subseteq Y} (-1)^{\left|X \cup F \right|-|S|} \, m(S) \geq 0. \]
  \end{enumerate}
  We call $m$ the \textit{multiplicity function}.
  If $M=(E,\rk,m)$ only satisfies axioms (A1) and (A2), we say that $M$ is a \emph{quasi-arithmetic matroid}.
  If $M$ satisfies only axiom (P), we say that it is a \emph{pseudo-arithmetic matroid}.
\end{definition}

If $m(\emptyset)=1$, we said that the arithmetic matroid $(E, \rk, m)$ is \textit{torsion-free}.
If $m(E)=1$, the matroid is \textit{surjective}.

Recall that any finitely generated abelian group $G$ has a (finite) torsion subgroup, which we denote by $\Tor(G)$, and a well-defined rank $\rk(G) \in \N$.

\begin{definition}
A \textit{representation} of an arithmetic matroid $M=(E,\rk,m)$ is a finitely generated abelian group $G$ together with elements $(v_e)_{e \in E}$ such that for all $X \subseteq E$ we have:
\begin{itemize}
\item $\rk (X)= \rk (\langle v_e\rangle_{e \in X})$,
\item $m(X)= \lvert \Tor(G/\langle v_e\rangle_{e \in X}) \rvert$,
\end{itemize}
where $\langle v_e\rangle_{e \in X}$ is the subgroup generated by $v_e$ for $e \in X$.

A representation is \textit{essential} if $\rk(G)=\rk(E)$.
\end{definition}

Notice that, if $M$ is torsion-free, every representation is a collection of integer vectors $(v_e)_{e \in E}$ in a lattice $\Lambda$, i.e.\ a free finitely generated abelian group.
Once a basis $\mathcal{B}$ of $\Lambda \simeq \Z^r$ is fixed, the vectors $(v_e)_{e \in E}$ can be identified with the columns of a matrix $A \in \M(r,|E|;\Z)$.
A different choice for the basis gives a matrix $A'=UA$ for some $U\in \GL(r;\Z)$.

\begin{definition}
Let $(G, (v_e)_{e \in E})$ and $(H, (w_e)_{e \in E})$ be two representations of an arithmetic matroid $M$.
The two representations are \textit{equivalent} if there exists
a group isomorphism $\varphi \colon G \to H$ such that $\varphi(\< v_e \>)= \< w_e \>$ for all $e\in E$.
\end{definition}

Notice that $\varphi(v_e) \in \set{w_e, -w_e}$, hence $\varphi(\< v_e \>_{e \in X})= \< w_e \>_{e \in X}$ for all $X \subseteq E$.

A topological motivation for the previous definitions comes from the fact that a representation of a torsion-free arithmetic matroid is a central toric arrangement.

\begin{definition}[Central toric arrangement]
	A central toric arrangement is a finite collection $\A$
	of hypertori in a torus $T \cong (\C^*)^r$, for some $r > 0$.
\end{definition}

Two representations are equivalent if and only if they describe isomorphic toric arrangements.

\section{The strong gcd property}
\label{sec:strong-gcd}

In this section, we introduce and study the strong gcd property.
This is a variant of the gcd property, which was introduced in \cite[Section 3]{DAdderioMoci2013}.
The gcd property is satisfied by all representable torsion-free arithmetic matroid, see \cite[Remark 3.1]{DAdderioMoci2013}.
The strong version is satisfied by all representable, surjective, and torsion-free arithmetic matroid, see \Cref{cor:representable-strong-gcd} below.

\begin{definition}
An arithmetic matroid $M = (E, \rk, m)$ satisfies the \emph{gcd property} if, for every subset $X \subseteq E$,
  \[ m(X) = \gcd \set{m(I) \mid I \subseteq X, \, \left| I \right| = \rk(I) = \rk (X)}. \]
\end{definition}

\begin{definition}
  \label{def:strong-gcd}
  An arithmetic matroid $M = (E, \rk, m)$ satisfies the \emph{strong gcd property} if, for every subset $X \subseteq E$,
  \[ m(X) = \gcd \set{m(B) \mid \text{$B$ basis and } \left|B \cap X \right| = \rk(X)}. \]
\end{definition}

\begin{lemma}
  \label{lemma:strong-implies-gcd}
  Let $M$ be an arithmetic matroid.
  If $M$ satisfies the strong gcd property, then it also satisfies the gcd property.
\end{lemma}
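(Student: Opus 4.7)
The plan is to prove the two divisibilities $m(X) \mid \gcd_I m(I)$ and $\gcd_I m(I) \mid m(X)$ separately, where $I$ ranges over the bases of $X$ (maximal independent subsets of $X$, necessarily of size $\rk(X)$). Both directions rely only on axiom (A1) and the strong gcd hypothesis.

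For the first direction, I fix a basis $I$ of $X$ and enumerate $X \setminus I$ as $e_1, \dots, e_k$. Since $I$ is already a basis of $X$, every intermediate set $I \cup \{e_1, \dots, e_j\}$ has the same rank as $I$, so adding $e_{j+1}$ does not increase the rank. Axiom (A1) then yields $m(I \cup \{e_1, \dots, e_{j+1}\}) \mid m(I \cup \{e_1, \dots, e_j\})$. Iterating this chain gives $m(X) \mid m(I)$; since this holds for every basis $I$ of $X$, we conclude $m(X) \mid \gcd_I m(I)$.

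For the reverse direction, I use the strong gcd identity $m(X) = \gcd_B m(B)$, where $B$ ranges over the bases of $M$ with $\lvert B \cap X \rvert = \rk(X)$. For any such $B$, the set $I_B \defeq B \cap X$ is independent of size $\rk(X)$, hence a basis of $X$. Starting from $I_B$ and adjoining the elements of $B \setminus I_B$ one at a time produces an increasing chain of independent sets (subsets of $B$), so each step strictly increases the rank. Applying axiom (A1) at every step gives $m(I_B) \mid m(B)$, and therefore $\gcd_I m(I) \mid m(I_B) \mid m(B)$. Taking the gcd over all admissible $B$ yields $\gcd_I m(I) \mid m(X)$.

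The two divisibilities combine to give the desired equality. No step presents a real obstacle: one only needs the easy observations that every $X \subseteq E$ admits a basis and that every basis of $X$ extends to a basis of $M$ (ensuring that neither gcd set is empty), together with the fact that axiom (A1) controls the change in multiplicity across a single-element extension regardless of the surrounding combinatorics.
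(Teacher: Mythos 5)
Your proof is correct.  Both divisibility directions are sound: the first chain uses that each extension $I_j \to I_j \cup \{e_{j+1}\}$ inside $X$ is rank-preserving, so (A1) gives $m(I_{j+1}) \mid m(I_j)$; the second chain uses that each extension $I_B \to I_B \cup\{f\}$ inside the basis $B$ is rank-increasing, so (A1) gives $m(I_B) \mid m(B)$.  Your remarks about nonemptiness of the gcd sets are the right bookkeeping.

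The route, however, is genuinely different from the paper's.  You prove the two divisibilities $m(X) \mid \gcd_I m(I)$ and $\gcd_I m(I) \mid m(X)$ separately, and each direction leans on axiom (A1) through a chain of single-element extensions; the strong gcd property is invoked only once, for $X$ itself.  The paper instead never touches (A1): it applies the strong gcd property \emph{twice} --- once to $X$, and once to each independent $I \subseteq X$ with $|I| = \rk(X)$, which gives $m(I) = \gcd\{\,m(B) : B \text{ basis},\, I \subseteq B\,\}$ --- and then finishes by partitioning the set of bases $B$ with $|B \cap X| = \rk(X)$ according to $B \cap X = I$, observing that (given the rank constraints) $B \cap X = I$ is equivalent to $I \subseteq B$.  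The paper's argument is thus a pure gcd rearrangement that would apply to any multiplicity function satisfying the strong gcd property, independently of the matroid axioms; your argument is a bit more hands-on and uses the divisibility structure guaranteed by (A1), which is of course available in the stated hypotheses.  Both are clean; the paper's is slightly more economical in its use of axioms, while yours makes the two divisibility inequalities explicit rather than collapsing them into one gcd identity.
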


\begin{proof}
  For every independent set $I \subseteq E$, we have that
  \[ m(I) = \gcd \set{ m(B) \mid \text{$B$ basis and } I \subseteq B}. \]
  Then, for a generic subset $X \subseteq E$,
  \begin{align*}
    m(X) &= \gcd \set{ m(B) \mid \text{$B$ basis and } \left|B \cap X \right| = \rk(X)} \\
    &= \gcd \big\{ \gcd \set{ m(B) \mid \text{$B$ basis and } B \cap X = I } \mid I \subseteq X \text{ and }  \\
    & \;\;\quad\qquad \left|I\right| = \rk(I) = \rk(X) \big\} \\
    &\smash{\stackrel{(*)}{=}} \gcd \big\{ \gcd \set{ m(B) \mid \text{$B$ basis and } I \subseteq B } \mid I \subseteq X \text{ and } \\
    & \;\;\quad\qquad \left|I\right| = \rk(I) = \rk(X) \big\} \\
    &= \gcd \set{ m(I) \mid I \subseteq X \text{ and } \left|I\right| = \rk(I) = \rk(X) }.
  \end{align*}
  The equality $(*)$ follows from $|I| = \rk(I) = \rk(X) \geq \rk(B \cap X) = |B \cap X|$.
\end{proof}

\begin{lemma}
  \label{lemma:strong-gcd-dual}
  Let $M$ be an arithmetic matorid.
  If $M$ satisfies the strong gcd property, then its dual $M^*$ also satisfies the strong gcd property.
\end{lemma}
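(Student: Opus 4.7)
The plan is to reduce the statement directly to the strong gcd property of $M$ by using the standard duality relations for arithmetic matroids. Recall that the bases of $M^*$ are exactly the complements $B^* = E \setminus B$ of bases $B$ of $M$, that $m^*(X) = m(E \setminus X)$, and that $\rk_{M^*}(X) = |X| - \rk(E) + \rk(E \setminus X)$.

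Fix $X \subseteq E$ and set $Y = E \setminus X$. I would first translate the condition on $B^* = E \setminus B$ appearing in the definition of the strong gcd property for $M^*$ into a condition on $B$. Writing $|B^* \cap X| = |X| - |B \cap X|$ and using $|B \cap X| = \rk(E) - |B \cap Y|$ (because $|B| = \rk(E)$), the equality $|B^* \cap X| = \rk_{M^*}(X)$ becomes
\[ |X| - \rk(E) + |B \cap Y| = |X| - \rk(E) + \rk(Y), \]
i.e.\ $|B \cap Y| = \rk(Y)$. So $B^*$ satisfies the basis condition for $X$ in $M^*$ if and only if $B$ satisfies the basis condition for $Y$ in $M$.

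Combining this bijection of bases with $m^*(B^*) = m(E \setminus B^*) = m(B)$ and $m^*(X) = m(Y)$, I would rewrite the right-hand side of the strong gcd property for $M^*$ applied to $X$:
\[ \gcd\set{m^*(B^*) \mid B^* \text{ basis of } M^*,\ |B^* \cap X| = \rk_{M^*}(X)} = \gcd\set{m(B) \mid B \text{ basis of } M,\ |B \cap Y| = \rk(Y)}. \]
By the strong gcd property of $M$ applied to $Y$, this last gcd equals $m(Y) = m^*(X)$, which finishes the proof.

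The whole argument is essentially bookkeeping; the only non-trivial step is the equivalence $|B^* \cap X| = \rk_{M^*}(X) \iff |B \cap Y| = \rk(Y)$, and even this is just one short calculation using $|B| = \rk(E)$ and the formula for $\rk_{M^*}$. There is no substantive obstacle: the strong gcd property is formulated precisely in a basis-symmetric way that makes it stable under matroid duality.
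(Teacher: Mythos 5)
Your proof is correct and follows essentially the same route as the paper's: translate the basis condition $|B^* \cap X| = \rk_{M^*}(X)$ into $|B \cap (E\setminus X)| = \rk(E\setminus X)$ via $|B|=\rk(E)$ and the rank-duality formula, then apply the strong gcd property of $M$ to $E\setminus X$. The bookkeeping is identical to the paper's computation labelled $(*)$.
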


\begin{proof}
  Let $M = (E, \rk, m)$ and $M^* = (E, \rk^*, m^*)$.
  For every subset $X \subseteq E$, we have
  \begin{align*}
    m^*(X^c) &= m(X) = \gcd \set{ m(B) \mid \text{$B$ basis of $M$ and } \left|B \cap X\right| = \rk(X)} \\
    &\smash{\stackrel{(*)}{=}} \gcd \set{ m^*(B^c) \mid \text{$B^c$ is a basis of $M^*$ and } \left|B^c\cap X^c\right| = \rk^*(X^c) }.
  \end{align*}
  The equality $(*)$ follows from $|B^c \cap X^c| = |(B \cup X)^c| = |E| - (|B| + |X| - |B \cap X|) = |X^c| - |B| + |B \cap X| = |X^c| - \rk(E) + \rk(X) = \rk^*(X^c)$.
\end{proof}

\begin{theorem}
  \label{thm:strong-iff-gcd}
  Let $M$ be an arithmetic matroid.
  Then $M$ satisfies the strong gcd property if and only if both $M$ and $M^*$ satisfy the gcd property.
\end{theorem}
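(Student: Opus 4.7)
The plan is to prove the two implications separately. The forward direction is immediate from the preceding lemmas: if $M$ satisfies the strong gcd property, then \Cref{lemma:strong-implies-gcd} gives the gcd property for $M$, while \Cref{lemma:strong-gcd-dual} transfers the strong gcd property to $M^*$, so \Cref{lemma:strong-implies-gcd} applied again yields the gcd property for $M^*$.

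For the converse, assume both $M$ and $M^*$ satisfy the gcd property. The first substantive step is to derive a convenient reformulation of the dual gcd property: for every independent subset $I \subseteq E$, one has
\[ m(I) = \gcd\{m(B) \mid B \text{ is a basis of } M, \, B \supseteq I\}. \]
To see this, apply the gcd property of $M^*$ to $I^c$ and use $m^*(J) = m(J^c)$. The condition that $J \subseteq I^c$ is independent in $M^*$ (equivalently, $\rk(J^c) = \rk(E)$) combined with $\rk^*(J) = \rk^*(I^c)$ and $\rk(I) = |I|$ forces $|J^c| = \rk(E)$, so $J^c$ is a basis of $M$ containing $I$, and conversely any such basis arises in this way.

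The second step is to combine this with the gcd property of $M$. For an arbitrary subset $X \subseteq E$,
\[ m(X) = \gcd\{m(I) \mid I \subseteq X, \, |I| = \rk(I) = \rk(X)\}, \]
and expanding each $m(I)$ via the formula above writes $m(X)$ as a gcd of $m(B)$ over pairs $(I, B)$ where $I$ is a basis of $X$ and $B$ is a basis of $M$ with $I \subseteq B$. These pairs are in bijection with bases $B$ of $M$ satisfying $|B \cap X| = \rk(X)$: indeed, for such a $B$ the intersection $B \cap X$ is automatically independent (being a subset of $B$) and has the right cardinality, so it is the unique basis of $X$ contained in $B$. Thus $m(X) = \gcd\{m(B) \mid B \text{ basis}, \, |B \cap X| = \rk(X)\}$, which is the strong gcd property.

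The main obstacle is purely bookkeeping: carefully matching the condition ``$J \subseteq I^c$ independent in $M^*$ with $\rk^*(J) = \rk^*(I^c)$'' to ``$J^c$ is a basis of $M$ containing $I$'' via the identities $\rk^*(Y) = |Y| - \rk(E) + \rk(Y^c)$ and $m^* = m \circ (\cdot)^c$. Once this translation is done, the rest of the argument is a clean two-step gcd decomposition.
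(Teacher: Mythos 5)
Your proof is correct and follows essentially the same route as the paper's: the forward direction via \Cref{lemma:strong-gcd-dual} and \Cref{lemma:strong-implies-gcd}, and for the converse, first reformulating the gcd property of $M^*$ to obtain $m(I)=\gcd\{m(B) \mid B \text{ basis}, I \subseteq B\}$ for independent $I$, and then combining this with the gcd property of $M$ via the observation that the bases $B$ appearing in the resulting double gcd are exactly those with $|B\cap X|=\rk(X)$.
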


\begin{proof}
  If $M$ satisfies the strong gcd property, then the same is true for $M^*$ by Lemma \ref{lemma:strong-gcd-dual}, and therefore both $M$ and $M^*$ satisfy the gcd property by Lemma \ref{lemma:strong-implies-gcd}.
  
  Conversely, suppose that $M$ and $M^*$ both satisfy the gcd property.
  By the gcd property for $M$, for every $X \subseteq E$, we have
  \begin{equation}
    \label{eq:M-gcd}
    m(X) = \gcd \set{ m(I) \mid I \subseteq X \text{ and }  \left|I\right| = \rk(I) = \rk(X) }.
  \end{equation}
  By the gcd property for $M^*$, for every independent set $I \subseteq E$ we have
  \begin{align*}
    m(I) &= m^*(I^c) = \gcd \set{m^*(B^c) \mid B^c \subseteq I^c \text{ and } \left|B^c\right| = \rk^*(B^c) = \rk^*(I^c) } \\
    &= \gcd \set{m(B) \mid I \subseteq B \text{ and } \left|B^c\right| = \rk^*(B^c) = \rk^*(I^c) }.
  \end{align*}
  The condition $\left|B^c\right| = \rk^*(B^c) = \rk^*(I^c)$ can be rewritten as $\left|B^c\right| = |B^c| - \rk(E) + \rk(B) = |I^c| - \rk(E) + \rk(I)$.
  The first equality implies that $\rk(B) = \rk(E)$.
  By the second equality, we obtain $|B^c| = |I^c| - \rk(E) + |I| = |E| - \rk(E)$, thus $|B| = \rk(E)$.
  Therefore $B$ is a basis.
  Then
  \begin{equation}
    \label{eq:Mdual-gcd}
    m(I) = \gcd \set{m(B) \mid I \subseteq B \text{ and $B$ is a basis}}.
  \end{equation}
  In particular, if $I \subseteq X \subseteq E$ and $|I| = \rk(I) = \rk(X)$, then $\rk(I) \leq \rk(B \cap X) \leq \rk(X)$ and therefore $|B \cap X| = \rk(B \cap X) = \rk(X)$.
  Putting together \cref{eq:M-gcd,eq:Mdual-gcd}, we finally obtain
  \[ m(X) = \gcd \set{m(B) \mid \text{$B$ basis and } \left|B \cap X \right| = \rk(X)}. \]
  This proves the strong gcd property for $M$.
\end{proof}

\begin{corollary}
  \label{cor:representable-strong-gcd}
  Let $M$ be a surjective, torsion-free, and representable arithmetic matroid.
  Then $M$ satisfies the strong gcd property.
\end{corollary}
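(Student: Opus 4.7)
My plan is to invoke \Cref{thm:strong-iff-gcd} and reduce the statement to checking the ordinary gcd property for both $M$ and $M^*$. Since the gcd property is known to hold for every representable torsion-free arithmetic matroid (\cite[Remark 3.1]{DAdderioMoci2013}), the content of the proof is really just to verify that $M^*$ inherits both properties from $M$.

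First I would observe that torsion-freeness of $M^*$ is automatic from the hypotheses: $m^*(\emptyset) = m(E) = 1$ because $M$ is surjective. Symmetrically, $M^*$ is surjective because $M$ is torsion-free. So the surjective-and-torsion-free condition is preserved under duality, which is the reason why both hypotheses appear together in the statement.

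Next I would establish representability of $M^*$. Fix an essential representation of $M$; since $M$ is torsion-free and surjective, this representation can be realized by a matrix $A \in \M(r,n;\Z)$ of rank $r$ whose columns generate all of $\Z^r$ (equivalently, the Smith normal form of $A$ is $[I_r \mid 0]$). Let $B \in \M(n-r,n;\Z)$ be a matrix whose rows form a $\Z$-basis of the kernel lattice $\ker(A) \subseteq \Z^n$. Then $B$ represents $M^*$: this is a standard duality computation where, for each $X \subseteq E$, one identifies $\Tor\bigl(\Z^{n-r}/\langle B_e\rangle_{e\in X^c}\bigr)$ with $\Tor\bigl(\Z^r/\langle A_e\rangle_{e\in X}\bigr)$ via the exact sequences coming from $A$ and $B$ on the complementary subsets of columns. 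In particular the multiplicities match, so $M^*$ is representable.

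Having shown that $M^*$ is torsion-free and representable, the cited result from \cite{DAdderioMoci2013} gives the gcd property for $M^*$, and of course also for $M$. Applying \Cref{thm:strong-iff-gcd} then delivers the strong gcd property for $M$. The only non-routine step is the duality computation verifying that $B$ represents $M^*$ as an \emph{arithmetic} matroid (not merely as a matroid); this is the main obstacle, but it is well-known and reduces to comparing the cokernels of the two complementary submatrices via the Smith normal form of $A$.
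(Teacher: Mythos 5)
Your proposal is correct and follows essentially the same route as the paper: reduce to the gcd property for $M$ and $M^*$ via \Cref{thm:strong-iff-gcd}, then cite \cite[Remark 3.1]{DAdderioMoci2013} after noting that $M^*$ is torsion-free and representable because $M$ is surjective and representable. You simply spell out the standard duality of representations (via the kernel lattice) in more detail than the paper, which leaves that fact implicit.
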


\begin{proof}
  By \cite[Remark 3.1]{DAdderioMoci2013}, a torsion-free representable arithmetic matroid satisfies the gcd property.
  In particular, this applies to $M$.
  Since $M$ is surjective and representable, its dual $M^* = (E, \rk^*, m^*)$ is torsion-free and representable, and thus it also satisfies the gcd property.
  By Theorem \ref{thm:strong-iff-gcd}, we deduce that $M$ satisfies the strong gcd property.
\end{proof}

As a final remark, notice that the strong gcd property is not preserved under deletion or contraction.

\section{Reduction of quasi-arithmetic matroids}
\label{sec:reduction}

In this section, we introduce a new operation on quasi-arithmetic matroids, which we call \emph{reduction}.
We will use this construction in the algorithm that computes the representations of a torsion-free arithmetic matroid.

\begin{definition}[Reduction]
  Let $M = (E, \rk, m)$ be a quasi-arithmetic matroid.
  Its reduction is the quasi-arithmetic matroid $\overline M = (E, \rk, \overline m)$ on the same groundset, with the same rank function, and with multiplicity function $\overline m$ is given by
  \[ \overline m(X) = \frac{\gcd \set{ m(B) \mid \text{$B$ is a basis, and } \rk(X) = \left|X \cap B\right| }}{\gcd \set{ m(B) \mid \text{$B$ is a basis} }}. \]
\end{definition}

Given a matroid $\MM = (E, \rk)$ and two subsets $X,Y \subseteq E$, define
\[ \begin{split}
  \B_{(X,Y)} = \{\, (B_1, B_2) \mid \; & \text{$B_1$ and $B_2$ are bases of $\MM$, } \rk(X) = |X \cap B_1|, \\
  &  \text{ and } \rk(Y) = |Y \cap B_2| \, \}.
\end{split} \]

\begin{lemma}
  \label{lemma:bijection}
  Let $\MM = (E, \rk)$ be a matroid, and let $(X,Y)$ be a molecule with $Y = X \sqcup T \sqcup F$ as in \Cref{def:molecule}.
  Then there is a bijection $\varphi \colon \B_{(X,Y)} \to \B_{(X \sqcup T, \, X \sqcup F)}$ given by
  \[ \varphi(B_1, B_2) = \big( (B_1 \setminus X) \cup (B_2 \cap (X \cup T)), \, 
  (B_2 \setminus (X \cup T)) \cup (B_1 \cap X) \big). \]
\end{lemma}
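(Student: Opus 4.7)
The plan is to first unpack what the molecule structure forces on elements of $\B_{(X,Y)}$ and $\B_{(X\sqcup T,\, X\sqcup F)}$, then write $\varphi$ in a more symmetric form so that the inverse becomes transparent, and finally verify the bases conditions by a closure argument using $T \subseteq \cl(X)$.

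The structural observations I would extract first are the following. For $(B_1,B_2) \in \B_{(X,Y)}$: the set $B_1 \cap X$ is a basis of $X$, hence spans $\cl(X) \supseteq T$, so $B_1 \cap T = \emptyset$; in particular $B_1 \setminus X = B_1 \setminus (X \cup T)$. For $B_2$, the identity $\rk(Z) = \rk(X) + |Z \cap F|$ applied to $Z = X \cup T$ and $Z = X \cup F$, combined with $|B_2 \cap Y| = \rk(Y)$, forces $F \subseteq B_2$ and $B_2 \cap (X \cup T)$ to be a basis of $X \cup T$. The symmetric analysis on the target side will show that for $(B_1', B_2') \in \B_{(X\sqcup T, X \sqcup F)}$: $B_1' \cap (X \cup T)$ is a basis of $X \cup T$, while $B_2' \cap X$ is a basis of $X$, $F \subseteq B_2'$, and $B_2' \cap T = \emptyset$.

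Next I would check that $\varphi$ lands in the target. Decomposing $B_1 = (B_1 \cap X) \sqcup (B_1 \setminus (X\cup T))$ and $B_2 = (B_2 \cap (X\cup T)) \sqcup (B_2 \setminus (X \cup T))$, the map simply swaps the two ``inside'' pieces:
\[
B_1' = (B_1 \setminus (X\cup T)) \cup (B_2 \cap (X\cup T)), \qquad B_2' = (B_2 \setminus (X\cup T)) \cup (B_1 \cap X).
\]
A direct cardinality count gives $|B_1'| = |B_2'| = \rk(E)$ and $|B_1' \cap (X\cup T)| = \rk(X\cup T)$, $|B_2' \cap (X\cup F)| = \rk(X\cup F)$. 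The substantive point is that $B_1'$ and $B_2'$ are bases. For $B_1'$, I would use that $B_2 \cap (X\cup T)$ spans $\cl(X \cup T) \supseteq X$, so $\cl(B_1') \supseteq X \cup (B_1 \setminus X) = B_1 \cup X$, which forces $\cl(B_1') = E$; combined with $|B_1'| = \rk(E)$, this gives a basis. For $B_2'$ the argument is the same with $B_1 \cap X$ spanning $\cl(X) \supseteq T$, so $\cl(B_2') \supseteq B_2 \cup T$.

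The last step is exhibiting the inverse. Using the target-side structure ($B_2' \cap T = \emptyset$), the natural candidate is
\[
\psi(B_1', B_2') = \bigl((B_1' \setminus (X\cup T)) \cup (B_2' \cap X),\; (B_2' \setminus X) \cup (B_1' \cap (X\cup T))\bigr),
\]
and a routine set-theoretic computation, invoking $B_1 \cap T = \emptyset$ in one direction and $B_2' \cap T = \emptyset$ in the other, shows $\psi \circ \varphi = \id$ and $\varphi \circ \psi = \id$. The main obstacle is really the basis verification in step three; everything else is bookkeeping with disjoint unions, made clean by noting in advance that $B_1$ and $B_2'$ avoid $T$ entirely.
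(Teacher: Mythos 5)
Your proof is correct, and it takes a genuinely different route for the central step. Where the paper verifies that $B_3 = (B_1\setminus X)\cup(B_2\cap(X\cup T))$ is a basis by applying the submodularity inequality to the pair $(B_3, X\cup T)$ together with the inclusions $B_1 \subseteq B_3\cup X\cup T$ and $B_2\cap(X\cup T) \subseteq B_3\cap(X\cup T)$ to force $\rk(B_3)\geq\rk(E)$, you instead argue by closure: $B_2\cap(X\cup T)$ is a basis of $X\cup T$, so its closure contains $X$, hence $\cl(B_1')\supseteq(B_1\setminus X)\cup X\supseteq B_1$ gives spanning, and spanning plus $|B_1'|=\rk(E)$ gives a basis. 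The closure route is more elementary and makes the role of $T$ transparent; the submodularity route is more mechanical but avoids naming $\cl$. Likewise you derive $B_1\cap T=\emptyset$ directly from $T\subseteq\cl(X)=\cl(B_1\cap X)$ and independence of $B_1$, which is shorter than the paper's rank chain. A genuine conceptual gain in your write-up is the reformulation of $\varphi$, after noting $B_1\setminus X=B_1\setminus(X\cup T)$, as a literal swap of the pieces of $B_1$ and $B_2$ inside $X\cup T$: this makes the inverse $\psi$ (which coincides with the paper's) look inevitable rather than guessed. The target-side facts you extract ($F\subseteq B_2'$ and $B_2'\cap T=\emptyset$) are precisely what is needed for the $\varphi\circ\psi=\id$ direction, and your symmetric treatment fills in what the paper leaves as ``can be verified.''
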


\begin{proof}
  Notice that $F \subseteq B_2$, because $\rk(Y) = \rk(Y \cap B_2) = \rk(X) + |B_2 \cap F|$ (the first equality is by definition of $\B_{(X,Y)}$, and the second equality is by definition of molecule).
  
  We want to prove that $B_3 = (B_1 \setminus X) \cup (B_2 \cap (X \cup T))$ is a basis.
  The set $B_1 \setminus X$ is independent, and its rank (or cardinality) is equal to $|B_1| - |X \cap B_1| = \rk(E) - \rk(X)$ by definition of $\B_{(X,Y)}$.
  The set $B_2 \cap (X \cup T)$ is also independent, and since $F \subseteq B_2$ its rank (or cardinality) is equal to $|B_2 \cap Y| - |F| = \rk(X) + |F| - |F| = \rk(X)$.
  Therefore $|B_3| \leq \rk(E)$.
  Applying property (2) of the rank function to the pair $(B_3, X \cup T)$, we obtain
  \[ \rk(B_3) + \rk(X \cup T) \geq \rk(B_3 \cup X \cup T) + \rk(B_3 \cap (X \cup T)). \]
  Notice that $\rk(X \cup T) = \rk(X)$ (by definition of molecule), $B_1 \subseteq B_3 \cup X \cup T$, and $B_2 \cap (X \cup T) \subseteq B_3 \cap (X \cup T)$.
  Then
  \[ \rk(B_3) + \rk(X) \geq \rk(B_1) + \rk(B_2 \cap (X \cup T)) = \rk(E) + \rk(X). \]
  Therefore $\rk(B_3) \geq \rk(E)$, and $B_3$ is a basis.
  
  We want now to check that $|B_3 \cap (X \cup T)| = \rk(X \cup T)$.
  We have $B_1 \cap T = \emptyset$, because
  \begin{align*}
    \rk(X) + |T \cap B_1| &= |X \cap B_1| + |T \cap B_1| = |(X \cap B_1) \sqcup (T \cap B_1)| \\
    &= |(X \cup T) \cap B_1| = \rk((X \cup T) \cap B_1) \\
    &\leq \rk(X \cup T) = \rk(X).
  \end{align*}
  Thus $B_3 \cap (X \cup T) = B_2 \cap (X \cup T)$, and this set has cardinality $\rk(X) = \rk(X \cup T)$.
  
  Similarly, $B_4 = (B_2 \setminus (X \cup T)) \cup (B_1 \cap X)$ is a basis, and $B_4 \cap (X \cup F)| = \rk(X \cup F)$.
  Therefore the map $\varphi$ is well-defined.
  
  The map $\psi \colon \B_{(X \sqcup T, \, X \sqcup F)} \to \B_{(X,Y)}$ defined by
  \[ \psi(B_3, B_4) = \big( (B_3 \setminus (X \cup T)) \cup (B_4 \cap X), \, (B_4 \setminus X) \cup (B_3 \cap (X \cup T)) \big) \]
  can be verified to be the inverse of $\varphi$.
  Therefore $\varphi$ is a bijection.
\end{proof}

\begin{lemma}
  \label{lemma:product}
  Let $M = (E, \rk, m)$ be a quasi-arithmetic matroid, and let $(X,Y)$ be a molecule with $Y = X \sqcup T \sqcup F$ as in \Cref{def:molecule}.
  If $\varphi \colon \B_{(X,Y)} \to \B_{(X \sqcup T, \, X \sqcup F)}$ is the bijection of \Cref{lemma:bijection}, and $(B_3, B_4) = \varphi(B_1, B_2)$, then
  \[ m(B_1) \, m(B_2) = m(B_3) \, m(B_4). \]
\end{lemma}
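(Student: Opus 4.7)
The plan is to apply axiom (A2) to four judiciously chosen molecules and to combine the resulting relations multiplicatively into the desired identity. I first fix notation: set $I_j = B_j \cap (X \cup T)$ and $R_j = B_j \setminus (X \cup T)$ for $j \in \{1,2\}$. Since $B_1 \cap T = \emptyset$ (as established inside the proof of \Cref{lemma:bijection}), we have $I_1 = B_1 \cap X$ and $R_1 = B_1 \setminus X$; moreover, both $I_1$ and $I_2$ are independent sets of cardinality $\rk(X) = \rk(X \cup T)$. The four bases appearing in the bijection are then $R_k \cup I_j$ for $j,k \in \{1,2\}$, namely $B_1 = R_1 \cup I_1$, $B_2 = R_2 \cup I_2$, $B_3 = R_1 \cup I_2$, and $B_4 = R_2 \cup I_1$.

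The crucial technical step is to verify that, for each $(j,k) \in \{1,2\}^2$, the pair $(I_j,\, R_k \cup X \cup T)$ is a molecule of $M$. Since $|I_j| = \rk(I_j) = \rk(X \cup T)$, we have $\cl(I_j) = \cl(X \cup T)$, so every element of $(X \cup T) \setminus I_j$ is a loop of the contraction $M / I_j$. On the other hand, because $R_k \cup I_j$ is a basis of $M$, the set $R_k$ is independent of rank $|R_k| = \rk(E) - \rk(X)$ in $M / I_j$. Therefore the restriction of $M / I_j$ to $R_k \cup (X \cup T) \setminus I_j$ has $R_k$ as its unique basis and the remaining elements as loops, which proves the molecule property with $T'' = (X \cup T) \setminus I_j$ and $F'' = R_k$.

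Applying axiom (A2) to each of these four molecules yields
\[ m(I_j)\, m(R_k \cup X \cup T) \,=\, m(X \cup T)\, m(R_k \cup I_j) \qquad \text{for every } j,k \in \{1,2\}. \]
Multiplying the identities for $(j,k) = (1,1)$ and $(2,2)$ gives an expression for $m(B_1)\, m(B_2)$, while multiplying those for $(j,k) = (1,2)$ and $(2,1)$ gives the analogous expression for $m(B_3)\, m(B_4)$. Both products evaluate to the same symmetric quantity
\[ \frac{m(I_1)\, m(I_2)\, m(R_1 \cup X \cup T)\, m(R_2 \cup X \cup T)}{m(X \cup T)^2}, \]
and therefore $m(B_1)\, m(B_2) = m(B_3)\, m(B_4)$.

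The main conceptual hurdle is spotting the molecules $(I_j,\, R_k \cup X \cup T)$: they are not the given molecule $(X,Y)$, nor an obvious contraction of it, but rather mix a basis of the flat $X \cup T$ with a basis of a complementary part. The key observation that makes them molecules is precisely that the closure of any basis $I_j$ of $X \cup T$ already encompasses all of $X \cup T$, turning the leftover elements into loops after contraction by $I_j$. Once this is noticed, the rank and closure bookkeeping is routine and (A2) does the rest.
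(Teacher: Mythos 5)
Your argument is correct and mirrors the paper's proof: both proceed by applying axiom (A2) to four molecules indexed by $(j,k)\in\{1,2\}^2$, each with bottom $I_j = B_j \cap (X \cup T)$ and top obtained by adjoining $R_k = B_k \setminus (X \cup T)$ to a fixed set of loops, and then multiplying the resulting identities in pairs so that the non-basis factors cancel. The only variation is in the choice of top: you use $R_k \cup X \cup T$, whereas the paper uses the slightly smaller $R_k \cup I_1 \cup I_2$ (so its common cancellation factor is $m((B_1 \cup B_2) \cap (X \cup T))$ rather than your $m(X \cup T)$ and $m(R_k \cup X \cup T)$); both choices work equally well, and you have the added merit of explicitly verifying that these four pairs are molecules, which the paper leaves implicit.
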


\begin{proof}
  Consider the following four molecules:
  \begin{align*}
    & (B_1 \cap X, \, (B_2 \cap (X \cup T)) \cup B_1); \\
    & (B_2 \cap (X \cup T), \, (B_1 \cap X) \cup B_2); \\
    & (B_2 \cap (X \cup T), \, (B_2 \cap (X \cup T)) \cup B_1); \\
    & (B_1 \cap X,\, (B_1 \cap X) \cup B_2).
  \end{align*}
  Applying axiom (A2) to these molecules, we get the following relations (we use the fact that $B_1 \cap T = \emptyset$, shown in the proof of \Cref{lemma:bijection}):
  \begin{align}
    & m(B_1 \cap X) \, m((B_2 \cap (X \cup T)) \cup B_1) = m((B_1 \cup B_2) \cap (X\cup T)) \, m(B_1); \\
    & m(B_2 \cap (X \cup T)) \, m((B_1 \cap X) \cup B_2) = m((B_1 \cup B_2) \cap (X\cup T)) \, m(B_2); \\
    & m(B_2 \cap (X \cup T)) \, m((B_2 \cap (X \cup T)) \cup B_1) = m((B_1 \cup B_2) \cap (X\cup T)) \, m(B_3); \\
    & m(B_1 \cap X) \, m((B_1 \cap X) \cup B_2) = m((B_1 \cup B_2) \cap (X\cup T)) \, m(B_4).
  \end{align}
  Let $k = m((B_1 \cup B_2) \cap (X\cup T))$.
  Multiplying the previous equations in pairs, we obtain $k^2 \, m(B_1) \, m(B_2) = k^2 \, m(B_3) \, m(B_4)$. Hence $m(B_1) \, m(B_2) = m(B_3) \, m(B_4)$.
\end{proof}

\begin{theorem}
  The reduction $\overline M$ of a quasi-arithmetic matroid $M = (E, \rk, m)$ is a torsion-free surjective quasi-arithmetic matroid, and it satisfies the strong gcd property.
\end{theorem}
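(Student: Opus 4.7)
The plan is to check the four assertions separately: $\overline m(\emptyset) = \overline m(E) = 1$, the quasi-arithmetic axioms (A1) and (A2), and the strong gcd property. Write $D = \gcd\set{m(B) \mid B \text{ basis}}$ and $N_X = \gcd\set{m(B) \mid B \text{ basis and } |B \cap X| = \rk(X)}$, so that $\overline m(X) = N_X/D$; since $D$ divides each $m(B)$ it divides $N_X$, so the quotient is a positive integer.

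For $X = \emptyset$ and $X = E$ every basis trivially satisfies $|B \cap X| = \rk(X)$, hence $N_X = D$ and $\overline m(X) = 1$. For the strong gcd property I would first observe that, for a basis $B$, the only basis $B'$ with $|B' \cap B| = \rk(E)$ is $B$ itself, so $\overline m(B) = m(B)/D$. Pulling the factor $1/D$ outside the gcd then yields
\[
  \gcd\set{\overline m(B) \mid B \text{ basis and } |B \cap X| = \rk(X)} = \frac{N_X}{D} = \overline m(X).
\]
Axiom (A1) follows immediately from this description: when $\rk(X \cup \{e\}) = \rk(X)$, every basis counted in $N_X$ is also counted in $N_{X \cup \{e\}}$, so $\overline m(X \cup \{e\}) \mid \overline m(X)$; when $\rk(X \cup \{e\}) = \rk(X) + 1$, a basis $B$ satisfies $|B \cap (X \cup \{e\})| = \rk(X) + 1$ iff $|B \cap X| = \rk(X)$ and $e \in B$, so the inclusion of indexing sets is reversed and the opposite divisibility holds.

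The main obstacle is axiom (A2), which after cancelling $D^2$ reduces to the identity $N_X \, N_Y = N_{X \cup T} \, N_{X \cup F}$ for every molecule $(X,Y)$ with $Y = X \sqcup T \sqcup F$. Here I would invoke the elementary fact that for finite sets of integers $\gcd(a_i) \cdot \gcd(b_j) = \gcd(a_i b_j)$ (easily proved via Bezout), which gives
\[
  N_X \cdot N_Y = \gcd\set{m(B_1)\,m(B_2) \mid (B_1, B_2) \in \B_{(X,Y)}}
\]
and analogously on the right-hand side over $\B_{(X \cup T,\, X \cup F)}$. \Cref{lemma:bijection,lemma:product} then combine to provide a bijection $\varphi$ between these two indexing sets that preserves the product $m(B_1)\,m(B_2)$, so the two gcds are taken over the same multiset of integers and coincide. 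This closes the proof.
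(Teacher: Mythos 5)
Your proposal is correct and follows the same route as the paper: you verify $\overline m(\emptyset)=\overline m(E)=1$ and axiom~(A1) directly from the definition, and you deduce axiom~(A2) from \Cref{lemma:bijection,lemma:product} via the identity $\gcd(a_i)\cdot\gcd(b_j)=\gcd(a_ib_j)$, which the paper uses implicitly when it writes $d^2\,\overline m(X)\,\overline m(Y)=\gcd\set{m(B_1)\,m(B_2)\mid (B_1,B_2)\in\B_{(X,Y)}}$. You also spell out the strong gcd property (which the paper dismisses as immediate); your argument there is right.
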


\begin{proof}
  Let $d = \gcd \set{ m(B) \mid \text{$B$ is a basis} }$.
  We start by checking axiom (A1) of \Cref{def:arithmetic-matroid}.
  Consider a subset $X\subseteq E$ and an element $e \in E$.
  \begin{itemize}
    \item If $\rk(X \cup \{e\}) = \rk(X)$, then a basis $B$ such that $\rk(X) = \rk(X \cap B)$ also satisfies $\rk(X \cup \{e\}) = \rk((X \cup \{e\}) \cap B)$.
    Therefore $d \cdot \overline m(X \cup \{e\}) \mid d \cdot \overline m(X)$.
  
    \smallskip
    \item Similarly, if $\rk(X \cup \{e\}) = \rk(X) + 1$, then a basis $B$ such that $\rk(X \cup \{e\}) = \rk((X \cup \{e\}) \cap B)$ also satisfies $\rk(X) = \rk(X \cap B)$.
    Therefore $d \cdot \overline m(X) \mid d \cdot \overline m(X \cup \{e\})$.
  \end{itemize}

  We now check axiom (A2).
  Let $(X,Y)$ be a molecule, with $Y = X \sqcup T \sqcup F$ as in \Cref{def:molecule}.
  By definition of $\overline m$, we have that
  \[ d^2 \, \overline m(X) \, \overline m(Y) = \gcd \set{ m(B_1) \, m(B_2) \mid (B_1, B_2) \in \B_{(X,Y)} }. \]
  Similarly,
  \[ d^2 \, \overline m(X \cup T) \, \overline m(X \cup F) = \gcd \set{ m(B_3) \, m(B_4) \mid (B_3, B_4) \in \B_{(X \cup T, \, X \cup F)} }. \]
  By \Cref{lemma:bijection,lemma:product}, we obtain $d^2 \, \overline m(X) \, \overline m(Y) = d^2 \, \overline m(X \cup T) \, \overline m(X \cup F)$, hence $\overline m(X) \, \overline m(Y) = \overline m(X \cup T) \, \overline m(X \cup F)$.
  Therefore $\overline M$ is a quasi-arithmetic matroid.
  
  By definition of $\overline m$, we also have that $\overline m(\emptyset) = \overline m(E) = 1$, i.e.\ $M$ is torsion-free and surjective.
  It is also immediate to check that $\overline M$ satisfies the strong gcd property.
\end{proof}

It is not true in general that the reduction of an arithmetic matroid is an arithmetic matroid.
We see this in the following example.

\begin{example}
  Let $\MM = (E,\rk)$ be the uniform matroid of rank $2$ on the groundset $E=\{1,2,\dots,6\}$.
  Consider the multiplicity function $m \colon \P(E) \to \N_+$ defined as
  \begin{align*}
    & m(\emptyset)=1, & & \\
    & m(\{1\})=m(\{2\})=2, & & \\
    & m(\{ j \})=1 & & \text{if } j>2, \\
    & m(\{ X\})=1 & & \text{if } |X \cap \{3, \ldots, 6\}|\geq 2, \\
    & m(\{ i,j\})=2 & &\text{if } i=1,2 \text{ and } j>2, \\
    & m(\{ 1,2 \})=4, & & \\
    & m(\{ 1,2,3\})=1, & & \\
    & m(\{ 1,2,j\})=2 & & \text{if } j>3.
  \end{align*}
  Then $M = (E, \rk, m)$ is an arithmetic matroid (this can be checked using the software library Arithmat \cite{arithmat}).
  We have that $\overline m(X) = m(X)$ for every $X \subseteq E$, except that $\overline m(1,2,3) = 2$.
  The quasi-arithmetic matroid $\overline M = (E, \rk, \overline m)$ does not satisfy axiom (P) for the molecule $(\{1,2\}, E)$.
\end{example}

However, the reduction of a representable arithmetic matroid turns out to be a representable arithmetic matroid.

\begin{theorem}
  \label{thm:reduction-rep}
  If $M = (E, \rk, m)$ is a representable arithmetic matroid, then its reduction $\overline M$ is also a representable arithmetic matroid.
\end{theorem}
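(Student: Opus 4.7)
The plan is to exhibit an explicit representation of $\overline M$ built from a given representation of $M$. Since the paper's focus is the torsion-free case, I assume $m(\emptyset)=1$; any representing group $G$ is then free. By restricting to the rank-$r$ direct summand $G_0 \subseteq G$ given by the saturation of $\langle v_e\rangle_{e\in E}$ in $G$ (a free complement contributes nothing to $\Tor(G/\langle v_e\rangle_X)$, so multiplicities are unchanged), I may assume the representation is essential, i.e.\ $G = \Lambda \cong \Z^r$ with $r=\rk(E)$. Let $A \in \M(r,n;\Z)$ be the matrix whose columns are the $v_e$.

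The candidate representation of $\overline M$ is $(\Lambda', (v_e)_{e\in E})$ with $\Lambda' := \langle v_e\rangle_{e\in E} \subseteq \Lambda$. Using a Smith normal form $A = USV$, where $U\in\GL(r,\Z)$, $V\in\GL(n,\Z)$, and $S$ is $r\times n$ with diagonal entries $d_1\mid\cdots\mid d_r$, the sublattice $\Lambda'$ has index $[\Lambda:\Lambda'] = d_1\cdots d_r =: d$, which by the SNF theory equals the gcd of $r\times r$ minors of $A$ and therefore $\gcd\{m(\beta) : \beta\text{ a basis of }M\}$. After changing basis of $\Lambda$ via $U$ and identifying $\Lambda' \cong \Z^r$ by dividing the $i$-th coordinate by $d_i$, the vectors $v_e$ become the columns of the $r\times n$ integer matrix $B$ formed by the first $r$ rows of $V$. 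I claim $B$ represents $\overline M$.

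Verification breaks into three steps. (i) Rank functions match: $SV_X$ equals $B_X$ with rows scaled by the positive integers $d_i$, and such scaling preserves $\Q$-rank, so $\rk_A(X)=\rk_B(X)$ for all $X$. (ii) On each basis $\beta$ of $M$, multilinearity of $\det$ yields $|\det(A_\beta)| = d\cdot|\det(B_\beta)|$, hence $m_B(\beta)=m(\beta)/d$; in particular $\gcd\{m_B(\beta) : \beta\text{ basis}\}=1$, so the matroid $M_B$ represented by $B$ is torsion-free and surjective. (iii) By \Cref{cor:representable-strong-gcd}, $M_B$ satisfies the strong gcd property, and therefore for every $X\subseteq E$,
\[ m_B(X) = \gcd\{m_B(\beta) : \beta\text{ basis},\, |\beta\cap X|=\rk(X)\} = \frac{\gcd\{m(\beta) : \beta\text{ basis},\, |\beta\cap X|=\rk(X)\}}{d} = \overline m(X). \]
Hence $M_B = \overline M$, so $\overline M$ is representable by $B$.

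The main obstacle is step (iii): the scaling identity on bases alone does not a priori pin down $m_B$ on non-basis subsets of $\P(E)$. The reason the two multiplicity functions nevertheless agree everywhere is precisely the strong gcd property established in \Cref{sec:strong-gcd}, applied to the torsion-free, surjective, representable matroid $M_B$. This is where the earlier preparation earns its keep.
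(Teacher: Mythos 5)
Your approach mirrors the paper's strategy very closely: exhibit a sublattice representation, observe that it represents a torsion-free surjective representable arithmetic matroid, and then invoke \Cref{cor:representable-strong-gcd} so that agreement on bases propagates to agreement on all subsets. The Smith-normal-form bookkeeping in (i)--(iii) is correct (in particular $A_\beta = UD\,B_\beta$ so $|\det A_\beta| = d\,|\det B_\beta|$, and the first $r$ rows of a unimodular $V$ do have unimodular column span by Laplace expansion of $\det V$). The paper phrases the same computation group-theoretically via the index tower $[K:\langle \bar v_b\rangle]=[K:\overline G]\cdot[\overline G:\langle \bar v_b\rangle]$; that is a presentational difference, not a substantive one.

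There is, however, a genuine gap. You begin by imposing $m(\emptyset)=1$, i.e.\ you prove the theorem only for torsion-free $M$. But \Cref{thm:reduction-rep} carries no such hypothesis: the reduction is defined for every quasi-arithmetic matroid, and the theorem asserts representability of $\overline M$ for every representable arithmetic matroid $M$. The paper's proof handles this by first quotienting the representing group $G$ by its torsion subgroup $T=\Tor(G)$, working inside $K=G/T$ with the images $\bar v_e$, and letting $\overline G=\langle \bar v_e\rangle\subseteq K$. The key added computation is that for a basis $B$ one has $\langle v_b\rangle_{b\in B}\cap T=\{0\}$ (because $B$ is independent), giving $m(B)=|T|\cdot|K/\overline G|\cdot m'(B)$; so $T$ contributes only a global factor that cancels in the definition of $\overline m$, and gcd-ing over bases identifies $d=|T|\cdot|K/\overline G|$. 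You would need to prepend exactly this reduction step; after that, your SNF argument applies verbatim to $\overline G\subseteq K$ in place of $\Lambda'\subseteq\Lambda$, and the rest of your proof closes the argument as in the paper.
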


\begin{proof}
  Let $(v_e)_{e\in E} \subseteq G$ be a representation of $M$.
  Denote by $K$ the quotient of $G$ by its torsion subgroup $T$.
  Let $\overline G$ be the sublattice of $K$ generated by $\set{ \bar v_e \mid e \in E }$, where $\bar v_e$ is the class of $v_e$ in $K$.
  We are going to show that $(\bar v_e)_{e \in E} \subseteq \overline G$ is a representation of $\overline M$.
  
  Let $M' = (E, \rk, m')$ be the arithmetic matroid associated with the representation $(\bar v_e)_{e \in E} \subseteq \overline G$.
  By construction, $M'$ is representable, torsion-free (because $\overline G$ is torsion-free), and surjective (because the vectors $\bar v_e$ generate $\overline G$).
  Therefore, by \Cref{cor:representable-strong-gcd}, it satisfies the strong gcd property.
  As a consequence,
  \[ \gcd \set{ m'(B) \mid \text{$B$ basis} } = m(E) = 1. \]
  
  Let $B$ be a basis of $M$.
  Since $B$ is independent, we have that $T \cap \< v_b \>_{b \in B} = \set{0}$.
  Then,
  \begin{align*}
    m(B) &= \left| \faktor{G}{\< v_b \>_{b \in B}} \right| = |T| \cdot \left| \faktor{K}{\< \bar v_b \>_{b \in B}} \right| = |T| \cdot \left| \faktor{K}{\overline G} \right| \cdot \left| \faktor{\overline G}{\< \bar v_b \>_{b \in B}} \right| \\
    &= |T| \cdot \left| \faktor{K}{\overline G} \right| \cdot m'(B).
  \end{align*}
  If $B$ varies among all bases of $M$, taking the gcd of both sides we get
  \[ \gcd \set{ m(B) \mid \text{$B$ basis} } = |T| \cdot \left| \faktor{K}{\overline G} \right|. \]
  Therefore
  \[ m'(B) = \frac{m(B)}{ \gcd \set{ m(B) \mid \text{$B$ basis} } } = \overline m(B). \]
  Since both $M'$ and $\overline M$ satisfy the strong gcd property, $m'(X) = \overline m(X)$ for every subset $X \subseteq E$.
  This means that $\overline M = M'$ is representable.
\end{proof}

Finally, notice that the reduction does not commute with deletion and contraction.
However, it commutes with taking the dual.

\section{Representations of arithmetic matroids}
\label{sec:representations}

In this section, we prove that a torsion-free arithmetic matroid $M = (E, \rk, m)$ of rank $r$ has at most $m(E)^{r-1}$ essential representations, up to equivalence.
At the same time, we describe an algorithm to list all such essential representations.

Callegaro and Delucchi showed that matroids with a unimodular basis admit at most one representation \cite{CallegaroDelucchi2017}.
This result was later generalized by Lenz, in the case of weakly multiplicative matroids \cite{Lenz2017}.
The first author proved the uniqueness of the representation for surjective matroids and showed that general torsion-free matroids admit at most $m(E)^r$ essential representations \cite{Pagaria2017}.
In this work, we describe how to explicitly construct all representations, and improve the upper bound.

\subsection{Representation of torsion-free surjective matroids}
Consider a torsion-free surjective arithmetic matroid $M = (E, \rk, m)$ of rank $r$.
We want to describe how to choose $n = \vert E \vert$ vectors $(v_e)_{e \in E}$ in $\Z^r$ that form a representation of $M$ in the lattice $\Lambda = \< v_e \mid e \in E \>_\Z$, if $M$ is representable.

Let $B \subseteq E$ be a basis of $M$.
Relabel the groundset $E$ so that $E = \set{1, 2, \dots, n}$ and $B = \set{1, 2, \dots, r}$.
For $i=1,\dots, r$, define $v_i = m(B) \, e_i$ where $(e_1, \dots, e_r)$ is the canonical basis of $\Z^r$.
The absolute values of the coordinates of $v_{r+1}, \dots, v_n$ are uniquely determined by $M$, as described in \cite{Pagaria2017}.
The entries $a_{ij}$ of the matrix $A \in \M(r,n;\Z)$ with columns $v_1, \dots, v_n$ satisfy
\[ |a_{ij}| =
  \begin{cases}
    m(B \setminus \{i\} \cup \{j\}) & \text{if $B \setminus \{i\} \cup \{j\}$ is a basis}; \\
    0 & \text{otherwise}.
  \end{cases}
\]

To determine the signs of the entries $a_{ij}$, we follow the idea of Lenz \cite{Lenz2017}.
Consider the bipartite graph $G$ on the vertex set $E  = B \sqcup (E \setminus B)$, having an edge $(i,j)$ whenever $i \in B$, $j \in E \setminus B$, and $B \setminus \{i\} \cup \{j\}$ is a basis.
Let $F$ be a spanning forest of $G$.
Since reversing the sign of some vectors does not change the equivalence class of a representation, we can set $a_{ij}$ to be positive for $(i,j) \in F$ as shown by Lenz \cite[Lemma~6]{Lenz2017}.
We determine the signs of the remaining entries $a_{ij}$ by iterating the following procedure.
\begin{enumerate}
  \item Let $(i,j)$ be an edge of $G \setminus F$ such that the distance between $i$ and $j$ in $F$ is minimal.
  
  \item Let $i_1, j_1, i_2, j_2, \dots, i_k, j_k$ be a minimal path from $i$ to $j$ in $F$, where $i_1=i$ and $j_k=j$.
  Consider the $k\times k$ minor $A'$ of the matrix $A$ indexed by the rows $i_1,\dots, i_k$ and the columns $j_1, \dots, j_k$.
  Notice that the signs of all entries of $A'$ have already been determined, except for $a_{ij}$.
  The absolute value of the determinant of $A'$ must be equal to
  \[ | \det A'| =
    \begin{cases}
      m(B)^{k-1} \cdot m(B') & \text{if $B'$ is a basis} \\
      0 & \text{otherwise}
    \end{cases}
  \]
  where $B' = B \setminus \set{i_1, \dots, i_k} \cup \set{j_1, \dots, j_k}$.
  By minimality of the distance between $i$ and $j$, the only non-zero entries of $A'$ are $a_{i_\ell \, j_\ell}$ and $a_{i_\ell \, j_{\ell-1}}$ for $\ell=1,\dots, k$ (where $j_0=j_k$).
  Then
  \[ |\det A'| = \left| \prod_{\ell=1}^k a_{i_\ell \, j_\ell} - (-1)^k \prod_{\ell=1}^k a_{i_\ell \, j_{\ell-1}} \right|. \]
  Comparing the two given expressions of $|\det A'|$, the sign of $a_{ij}$ can be uniquely determined.
  
  \item Add the edge $(i,j)$ to $F$.
\end{enumerate}

At some iteration of this procedure, the equation
\[ \left| \prod_{\ell=1}^k a_{i_\ell \, j_\ell} - (-1)^k \prod_{\ell=1}^k a_{i_\ell \, j_{\ell-1}} \right| \; = \; \begin{cases}
      m(B)^{k-1} \cdot m(B') & \text{if $B'$ is a basis} \\
      0 & \text{otherwise}
    \end{cases} \]
of the second step might have no solution.
If this happens, we can conclude that the matroid $M$ is not representable.

\begin{remark}
  \label{rmk:orientability}
  If $M$ is orientable (in the sense of \cite{PagariaOAM}), then there exists a \emph{chirotope} $\chi\colon E^r \to \set{-1, 0, 1}$ such that $a_{ij} = \chi(B \setminus \{i\} \cup \{j\}) \cdot m(B \setminus \{i\} \cup \{j\})$.
  This ensures that the equation of step (2) always has a solution.
  Conversely, a failure of step (2) implies that $M$ is not orientable.
\end{remark}

We have finally constructed a matrix $A$ whose columns $(v_e)_{e \in E}$ form a candidate representation of $M$ in the lattice $\Lambda = \< v_e \mid e \in E \>_\Z$.
To recover the coordinates of the vectors $(v_e)_{e \in E}$ with respect to a basis of $\Lambda$, we use the Smith normal form as explained by the following lemma.

\begin{lemma}
  Let $(v_e)_{e \in E}$ be a set of vectors in $\Z^r$, with coordinates described by a matrix $A \in \M(r,n;\Z)$ of rank $r$.
  Let $D = UAV$ be the Smith normal form of $A$.
  Then the $r \times n$ matrix consisting of the first $r$ rows of $V^{-1}$ gives the coordinates of the vectors $(v_e)_{e \in E}$ with respect to a basis of $\Lambda = \< v_e \mid e \in E \>_\Z$.
\end{lemma}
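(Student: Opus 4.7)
The plan is to factor $A$ through its Smith normal form and extract a basis of $\Lambda$ directly from this factorization. Starting from $D = UAV$, I would rewrite $A = U^{-1} D V^{-1}$. Since $A$ has rank $r$, the matrix $D \in \M(r,n;\Z)$ has nonzero entries only in its first $r$ columns, which form a nonsingular diagonal block $\Delta = \mathrm{diag}(d_1, \dots, d_r)$. Next I would split $V^{-1}$ as a block matrix with first $r$ rows $W_1 \in \M(r,n;\Z)$ and remaining rows $W_2$. Because the last $n-r$ columns of $D$ vanish, this gives $DV^{-1} = \Delta W_1$ and hence
\[ A = C W_1, \qquad \text{where } C := U^{-1} \Delta \in \M(r,r;\Z). \]

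The core step is to show that the columns of $C$ form a $\Z$-basis of $\Lambda = \langle v_e \mid e \in E \rangle_\Z$. Linear independence of these columns is automatic, since $U^{-1}$ is invertible and $\Delta$ is nonsingular. For the inclusion $\Lambda \subseteq \langle \text{columns of } C \rangle_\Z$, the identity $A = C W_1$ with $W_1$ integral immediately writes each $v_e$ as a $\Z$-linear combination of the columns of $C$. For the reverse inclusion, I would use the identity $AV = U^{-1} D$: its first $r$ columns are exactly the columns of $C$, and at the same time they are $\Z$-linear combinations of the columns of $A$ (with coefficients given by the first $r$ columns of $V$), and therefore lie in $\Lambda$.

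With this basis of $\Lambda$ in hand, the conclusion is read off from $A = C W_1$: the coordinate vector of $v_e$ (the $e$-th column of $A$) relative to the basis formed by the columns of $C$ is the $e$-th column of $W_1$, which is by definition the $e$-th column of the first $r$ rows of $V^{-1}$, as asserted. The main obstacle is essentially notational — keeping the block decompositions of the non-square matrices $D$ and $V^{-1}$ aligned and correctly isolating the nonsingular $r \times r$ block of $D$; no deeper arithmetic or structural difficulty arises, since both absorbing the unimodular factor $U^{-1}$ into $C$ and discarding the irrelevant rows $W_2$ are entirely routine once this bookkeeping is in place.
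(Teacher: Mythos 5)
Your proposal is correct and rests on the same factorization as the paper's proof: both extract $A = (U^{-1}D')\,W_1$, where $D'$ is the leading $r\times r$ block of $D$ (your $\Delta$) and $W_1$ is the first $r$ rows of $V^{-1}$ (the paper's $IV^{-1}$), and both identify the columns of $U^{-1}D'$ as the sought lattice basis. The one place you diverge is in justifying that those columns actually generate $\Lambda$ over $\Z$ and not merely over $\Q$: the paper observes that $W_1 = IV^{-1}$ has Smith normal form $I$ (because $I$ does and right-multiplication by the unimodular $V^{-1}$ preserves SNF), hence $W_1\Z^n = \Z^r$ and $\Lambda = (U^{-1}D')\,W_1\Z^n = (U^{-1}D')\,\Z^r$. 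You instead prove the two inclusions by hand, using the integrality of $W_1$ for $\Lambda \subseteq \langle\text{cols of }C\rangle$ and the identity $AV = U^{-1}D$ together with the integrality of $V$ for the reverse. Your version is a touch more elementary and self-contained (no appeal to invariance of SNF under unimodular multiplication), while the paper's is more compact; both are valid, and the resulting basis is the same.
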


\begin{proof}
  Recall that $U \in \GL(r;\Z)$, $V \in \GL(n;\Z)$, and $D \in \M(r, n; \Z)$.
  Let $D = D'I$, where $I$ is the block matrix $(\Id_{r \times r} \mid 0) \in \M(r,n;\Z)$ and $D' \in \M(r,r;\Z)$ is the matrix consisting of the first $r$ columns of $D$.
  Consider the vectors $w_1, \dots, w_r$ of $\Z^r$ given by the columns of $U^{-1}D'$.
  Then $U^{-1}D' \cdot IV^{-1} = A$, and therefore the columns of $IV^{-1}$ are the coordinates of $(v_e)_{e \in E}$ with respect to the $\Q$-basis $\B = (w_1, \dots, w_r)$.
  Since the matrix $IV^{-1}$ has integer entries and Smith normal form equal to $I$, the basis $\B$ is also a lattice basis of $\Lambda$.
  We conclude by noticing that $IV^{-1}$ is the matrix consisting of the first $r$ rows of $V^{-1}$.
\end{proof}

At this point, we have a candidate representation of the matroid $M$.
If $M$ is representable, this is the only possible representation of $M$ up to equivalence.
We only need to verify if it is indeed a representation of $M$, checking the multiplicity $m(X)$ for every subset $X \subseteq E$.

\begin{remark}
  Under our assumptions ($m(\emptyset) = m(E) = 1$), the matroid $M$ is representable if and only if it is orientable and satisfies the strong gcd property \cite[Proposition~8.3]{PagariaOAM}.
  Before the final check of our algorithm, $M$ is known to be orientable by Remark \ref{rmk:orientability}.
  Then the final check has a positive result if and only if $M$ satisfies the strong gcd property.
\end{remark}

\subsection{Representations of general torsion-free matroids}

In this section, we describe how to construct all essential representations (up to equivalence) of a general torsion-free matroid $M = (E, \rk, m)$.
Let $r = \rk(E)$.

Consider the reduction $\overline M$.
If $M$ is representable, then $\overline M$ must also be a representable arithmetic matroid by \Cref{thm:reduction-rep}.
Since $\overline M$ is torsion-free and surjective, using the algorithm of the previous section we can check if $\overline M$ is a representable arithmetic matroid.
Assume from now on that this is the case. Then the previous algorithm also yields the unique essential representation of $\overline M$ (up to equivalence), which consists of some integer matrix $A \in \M(r,n;\Z)$.

\begin{theorem}
  \label{thm:representations}
  If $A \in \M(r,n;\Z)$ is an essential representation of $\overline M$, then every essential representation of $M$ is equivalent to $HA$ for some matrix $H \in \M(r,r;\Z)$ in Hermite normal form, with $\det(H) = m(E)$.
\end{theorem}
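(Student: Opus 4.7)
The plan is to exploit the uniqueness of the representation of $\overline M$ (proven in the previous subsection) together with the explicit index computation that appeared in the proof of \Cref{thm:reduction-rep}. Let $A' \in \M(r,n;\Z)$ be an arbitrary essential representation of $M$. After fixing a basis, I view its columns $v_e$ as elements of $\Z^r$, and I consider the sublattice $\overline G = \langle v_e \mid e \in E \rangle_\Z \subseteq \Z^r$. The crucial numerical input is the identity
\[ [\Z^r : \overline G] \;=\; m(E). \]
One direction of this equality is the computation in the proof of \Cref{thm:reduction-rep} (with $T = 0$, since $M$ is torsion-free, it gives $[\Z^r : \overline G] = \gcd\{m(B) \mid B \text{ basis}\}$); the other is the gcd property, which is satisfied by representable torsion-free arithmetic matroids (\cite[Remark~3.1]{DAdderioMoci2013}) and yields $m(E) = \gcd\{m(B) \mid B \text{ basis}\}$.

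Next I choose any lattice basis $(w_1, \dots, w_r)$ of $\overline G$ and let $W \in \M(r,r;\Z)$ be the matrix whose columns are those basis vectors, so $|\det W| = m(E)$. Since every $v_e$ lies in $\overline G$, we may factor $A' = W A''$ where $A'' \in \M(r,n;\Z)$ records the coordinates of the $v_e$ in the basis $(w_i)$. By construction, $A''$ is an essential representation of the reduction $\overline M$. By the uniqueness result for essential representations of torsion-free surjective arithmetic matroids from the previous subsection, $A''$ is equivalent to $A$: there exist $\tilde U \in \GL(r;\Z)$ and signs $\epsilon_e \in \{\pm 1\}$ (assembled into a diagonal matrix $D_\epsilon$) such that $A'' = \tilde U \, A \, D_\epsilon$. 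Substituting gives $A' = (W \tilde U)\, A \, D_\epsilon$.

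Finally, I put $W \tilde U$ into Hermite normal form: write $W \tilde U = V^{-1} H$ with $V \in \GL(r;\Z)$ and $H \in \M(r,r;\Z)$ in HNF. Because the HNF has positive diagonal and $|\det H| = |\det(W\tilde U)| = |\det W| = m(E)$, we get $\det H = m(E)$. Rearranging, $V A' D_\epsilon = H A$, which exactly says that $A'$ and $H A$ are equivalent representations (left multiplication by $V \in \GL(r;\Z)$ and sign changes by $D_\epsilon$ on columns).

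The only nontrivial ingredient is the index identity $[\Z^r : \overline G] = m(E)$, which is why I explicitly invoke both the calculation from \Cref{thm:reduction-rep} and the gcd property for representable torsion-free matroids; everything else amounts to (i) applying uniqueness for $\overline M$ and (ii) the routine reduction of an invertible integer matrix to HNF via left $\GL(r;\Z)$-multiplication.
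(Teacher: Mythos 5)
Your proof is correct and follows essentially the same route as the paper's: factor an arbitrary essential representation of $M$ through the lattice it generates, recognize the quotient as an essential representation of $\overline M$, invoke uniqueness of representations in the surjective torsion-free case, and absorb the ambiguity into a Hermite normal form. Your matrix $W$ plays exactly the role of the paper's $H'$ (the inclusion $\overline G \hookrightarrow K$), and $\tilde U, D_\epsilon$ correspond to the paper's $U', S$. One small remark: to get $[\Z^r:\overline G]=m(E)$ you detour through the gcd property and the index computation from \Cref{thm:reduction-rep}, but this identity is immediate from the definition of an essential representation — since $\rk\langle v_e\rangle_{e\in E}=r$, the quotient $\Z^r/\overline G$ is finite and $m(E)=\lvert\Tor(\Z^r/\overline G)\rvert=[\Z^r:\overline G]$ directly, with no appeal to Remark~3.1 of \cite{DAdderioMoci2013}. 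The paper instead derives $\det H=m(E)$ after the fact from $m(E)=\det(H)\cdot\overline m(E)$ with $\overline m(E)=1$; both routes are sound.
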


\begin{proof}
  Every essential representation $C \in \M(r,n;\Z)$ of $M$ induces an essential representation $A' \in \M(r,n;\Z)$ of $\overline M$, as shown in the proof of \Cref{thm:reduction-rep}.
  These two representations are related as follows: $C' = H'A'$, where the matrix $H' \in \M(r,r;\Z)$ describes (in the chosen coordinates) the inclusion $\overline G \hookrightarrow K$, and has rank $r$.
  Since all representations of $\overline M$ are equivalent, we can write $A' = U' A S$ for some integer matrices $U' \in \GL(r;\Z)$ and $S \in \Z_2^n \subseteq \GL(n,\Z)$.
  Then we have $CS = H'U'A$.
  Let $U \in \GL(r, \Z)$ be an integer matrix such that $UH'U'$ is in Hermite normal form.
  We obtain that the representation $UCS$ of $M$ is equivalent to $C$ and can be written as $UCS = HA$, where $H=UH'U'$ is in Hermite normal form.
  Notice that $m(E) = \det(H) \cdot \overline m(E) = \det(H)$.
\end{proof}

Some of the representations given by \Cref{thm:representations} can be equivalent.
To compute a list of representatives of the equivalence classes of representations, one needs to compute a normal form of matrices in $\M(r,n;\Z)$ up to left-multiplication by $\GL(r;\Z)$ and change of sign of the columns.
We develop an algorithm to do this in the next section.

A direct consequence of \Cref{thm:representations} is a new upper bound on the number of non-equivalent representations of a torsion-free matroid.

\begin{corollary}
	Every torsion-free arithmetic matroid $M = (E, \rk, m)$ of rank $r$ has at most $m(E)^{r-1}$ equivalence classes of essential representations.
\end{corollary}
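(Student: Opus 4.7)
The plan is to combine Theorem~\ref{thm:representations} with a counting of Hermite normal forms. By that theorem, every essential representation of $M$ is equivalent to $HA$, where $A$ is a fixed essential representation of $\overline M$ (unique up to equivalence, by the results of the previous subsection) and $H \in \mathsf{M}(r,r;\Z)$ is in Hermite normal form with $\det H = m(E)$. Distinct equivalence classes of representations yield distinct Hermite normal forms: if $H_1 A$ and $H_2 A$ are equivalent, with $H_1, H_2$ both in HNF, then $H_1$ and $H_2$ lie in the same $\GL(r;\Z)$-orbit, which forces $H_1 = H_2$ by uniqueness of the HNF. Hence the number of equivalence classes of essential representations is at most the number of valid $H$.

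An $r \times r$ matrix in Hermite normal form is upper triangular with positive diagonal entries $d_1, \ldots, d_r$ satisfying $\prod_j d_j = m(E)$, and with off-diagonal entries $h_{ij}$ (for $i < j$) lying in $[0, d_j)$. For a fixed diagonal, the number of such HNFs is $\prod_{j=1}^r d_j^{j-1}$. Since $j - 1 \le r - 1$ and $d_j \ge 1$ for all $j$, one has the key inequality
\[
  \prod_{j=1}^r d_j^{j-1} \;\leq\; \prod_{j=1}^r d_j^{r-1} \;=\; m(E)^{r-1}.
\]

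The remaining and most delicate ingredient is to show that for a fixed matroid $M$ only one diagonal $(d_1, \ldots, d_r)$ arises among the HNFs $H$ such that $HA$ represents $M$. For upper triangular $H$ the cokernel $\Z^r / H\Z^r$ is isomorphic to $\bigoplus_j \Z/d_j$, so the multiset of diagonal entries is encoded in the invariant factors of this cokernel, which are in turn determined by the multiplicity function of $M$ through successive gcds of minors of the representing matrix; the ordering of the $d_j$'s is then pinned down by inspecting the multiplicities of suitable subsets of $E$ relative to the chosen $A$. Granted this uniqueness, the displayed inequality yields at most $m(E)^{r-1}$ equivalence classes, proving the corollary. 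Arguing the uniqueness of the ordered diagonal carefully is the main obstacle.
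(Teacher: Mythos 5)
Your overall structure matches the paper's: invoke Theorem~\ref{thm:representations}, count Hermite normal forms with a fixed diagonal as $\prod_j d_j^{j-1}$, bound this by $m(E)^{r-1}$, and reduce everything to showing that only one ordered diagonal $(d_1,\dots,d_r)$ can occur. But you leave that last step open, and the heuristic you sketch for it is not correct. For an upper-triangular $H$ with diagonal entries $d_1,\dots,d_r$, the cokernel $\Z^r/H\Z^r$ is generally \emph{not} $\bigoplus_j \Z/d_j$, and the diagonal entries of an HNF are \emph{not} its invariant factors; for instance $\left(\begin{smallmatrix}2&1\\0&2\end{smallmatrix}\right)$ has diagonal $(2,2)$ but cokernel $\Z/4$, while $\left(\begin{smallmatrix}1&0\\0&4\end{smallmatrix}\right)$ has the same invariant factors $(1,4)$ yet a different diagonal. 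So ``invariant factors of the cokernel determine the multiset of $d_j$'s'' is false, and the route you propose will not pin down the diagonal.

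The paper closes this gap with a short direct argument. Reorder $E$ so that $\{1,\dots,r\}$ is a basis, and take the fixed representation $A$ of $\overline M$ already in HNF. Then both $H$ and $A$, hence also $HA$, are upper triangular, and the leading principal $i\times i$ minor of $HA$ is the only nonvanishing $i\times i$ minor of its first $i$ columns. Since $\{1,\dots,i\}$ is independent, this forces $\det\bigl((HA)_i\bigr)=m(\{1,\dots,i\})$ and likewise $\det(A_i)=\overline m(\{1,\dots,i\})$, so $\det(H_i)=d_1\cdots d_i$ equals $m(\{1,\dots,i\})/\overline m(\{1,\dots,i\})$ for each $i$, uniquely determining each $d_i$. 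This is the ingredient your proposal is missing. (Also note that the paragraph arguing that distinct equivalence classes give distinct HNFs is both unnecessary for the upper bound and slightly off: equivalence of $H_1A$ and $H_2A$ allows a sign matrix on the right, so it does not immediately reduce to a left-$\GL(r;\Z)$ comparison of $H_1$ and $H_2$.)
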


\begin{proof}
	Reorder the groundset of $M$ so that the first $r$ elements form a basis.
	Let $A \in \M(r,n;\Z)$ be an essential representation of the reduction $\overline M = (E, \rk, \overline m)$.
	Without loss of generality, we can assume that $A$ is in Hermite normal form.
	
	Let $B_i$ be the $i\times i$ leading principal minor of a matrix $B$.
	For every $i \in \set{1, \dots, r}$, we have that $A_i$ is upper triangular and $\det(A_i) = \overline m(\set{1,\dots, i})$.
	If $H \in \M(r,r,\Z)$ is an upper triangular matrix such that $HA$ is a representation of $M$, then $\det(H_i) \det(A_i) = \det((HA)_i) = m(\set{1,\dots,i})$.
	By \Cref{thm:representations}, every essential representation of $M$ is equivalent to $HA$ for some matrix $H \in \M(r,r,\Z)$ in Hermite normal form such that $\det(H) = m(E)$.
	The diagonal entries $d_1, \dots, d_r$ of $H$ are uniquely determined by the previous relations.
	The number of such matrices $H$ is $\prod_{i=1}^r d_i^{i-1} \leq \prod_{i=1}^r d_i^{r-1} = m(E)^{r-1}$.	
\end{proof}

\begin{remark}
	The orientability of $M$ is equivalent to the orientability of the reduction $\overline M$.
	Then \Cref{rmk:orientability} yields an algorithm to check the orientability of $M$.
\end{remark}

\section{Signed Hermite normal form}
\label{sec:shnf}

In this section, we describe an algorithm that takes as input a matrix $A \in \M(r,n;\Z)$ and outputs a normal form with respect to the action of $\GL(r;\Z) \times \Z_2^n$.
Here $\GL(r;\Z)$ acts on $\M(r,n;\Z)$ by left-multiplication, and the $j$-th standard generator of $\Z_2^n$ acts by changing the sign of the $j$-th column.
It is convenient to view the elements of $\Z_2^n$ as the $n\times n$ diagonal matrices with diagonal entries equal to $\pm 1$.
Then a pair $(U, S) \in \GL(r;\Z) \times \Z_2^n$ acts on $\M(r,n;\Z)$ as $A \mapsto UAS$.

Recall that the (left) Hermite normal form is a canonical form for matrices in $\M(r,n;\Z)$ with respect to the left action of $\GL(r;\Z)$ (see for instance \cite{newman1972integral} and \cite{cohen1993course}).
We write $\HNF(A)$ for the Hermite normal form of $A$.
A matrix in Hermite normal form satisfies the following properties:
\begin{itemize}
	\item it is an upper triangular $r\times n$ matrix, and zero rows are located below non-zero rows;
	\item the pivot (i.e.\ the first non-zero entry) of a non-zero row is positive, and is strictly to the right of the pivot of the row above it;
	\item the elements below pivots are zero, and the elements above a pivot $q$ are non-negative and strictly smaller than $q$.
\end{itemize}
Our normal form with respect to the action of $\GL(r;\Z) \times \Z_2^n$ has a simple definition in terms of the Hermite normal form.
We call it the \emph{signed Hermite normal form}.

\begin{definition}
	The \emph{signed Hermite normal form} $\SHNF(A)$ of a matrix $A \in \M(r,n;\Z)$ is the lexicographically minimal matrix in the set $\set{ \HNF(AS) \mid S \in \Z_2^n }$.
	To compare two matrices lexicographically, we look at the columns from left to right, and in each column, we look at the entries from bottom to top.
\end{definition}

\begin{remark}
	By definition, a matrix in signed Hermite normal form is also in Hermite normal form.
\end{remark}

\begin{example}
	Consider the following sequence of $2\times 2$ matrices:
	\[
	\begin{pmatrix}
	4 & 2 \\
	0 & 3
	\end{pmatrix}
	\longrightarrow
	\begin{pmatrix}
	4 & -2 \\
	0 & -3
	\end{pmatrix}
	\longrightarrow
	\begin{pmatrix}
	4 & 1 \\
	0 & 3
	\end{pmatrix}.
	\]
	The leftmost matrix is in Hermite normal form, but it is not in signed Hermite normal form.
	Indeed, if we change the sign of the second column, we obtain the matrix in the middle; its Hermite normal form is given by the rightmost matrix, which is lexicographically smaller than the leftmost one.
\end{example}

A naive algorithm to compute the signed Hermite normal form could be: try all the $2^n$ elements $S \in \Z_2^n$; determine the left Hermite normal form of $AS$;  choose the lexicographically minimal result.
This algorithm runs in $2^n \cdot \text{poly}(n,r)$.
In the rest of this section, we are going to describe an algorithm which is polynomial in $n$ and $r$.

Given a matrix $A \in \M(r,n;\Z)$, we indicate by $A_j \in \Z^r$ the $j$-th column of $A$, and by $A_{:j} \in \M(r,j;\Z)$ the matrix consisting of the first $j$ columns of $A$.
We write $\Z_2^j$ for the subgroup of $\Z_2^n$ generated by the first $j$ standard generators of $\Z_2^n$.
Also, for every $m \leq r$, we regard the group $\GL(m;\Z)$ as a subgroup of $\GL(r;\Z)$ via the natural inclusion
\[ U \mapsto \blockmatrix{U}{0}{0}{I_{(r-m)\times (r-m)}}. \]
Define the stabilizer $\Stab(B)$ of a matrix $B \in \M(r,j;\Z)$ as the subgroup
\[ \Stab(B) = \set{ S \in \Z_2^{j} \mid BS = UB \text{ for some } U \in \GL(r;\Z) } \subseteq \Z_2^j. \]
This is the stabilizer of the orbit $\set{ U B \mid U \in \GL(r;\Z)}$ with respect to the right action of $\Z_2^j$.
Notice that $\Stab(B) = \Stab(VBT)$ for every $(V,T) \in \GL(r;\Z) \times \Z_2^{j}$, because $\Z_2^j$ is abelian.

The pseudocode to compute the signed Hermite normal form is given in \Cref{alg:s-normal-form}.
In the rest of this section, we are going to explain it with more details.

\begin{algorithm}
	\caption{Signed Hermite normal form}
	\label{alg:s-normal-form}
	
	\textbf{Input}: a matrix $A \in \M(r,n; \Z)$.\\
	\textbf{Output}: the signed Hermite normal form of $A$.
	
	\begin{algorithmic}[1]
		\State $G \gets \set{0}$, as a subgroup of $\Z_2^n$
		\State $A \gets$ Hermite normal form of $A$ \label{line:hnf-A}    
		
		\For{$j=1, 2, \dotsc, n$}\label{line:outer-loop}
		
		\State $m \gets \rk(A_{: j-1})$\label{line:rank}
		\State $q \gets A_{m+1, j}$\label{line:pivot}
		
		\State $\varphi \gets$ the group homomorphism $G \to \GL(m;\Z) \subseteq \GL(r;\Z)$ which maps $S \in G$ to the unique matrix $\varphi(S) \in \GL(m;\Z)$ such that $\varphi(S)A_{:j-1}S=A_{:j-1}$\label{line:phi}
		\State $G \gets G \times \Z_2$, where $\Z_2$ is the $j$-th factor of $\Z_2^n$\label{line:augment-G}
		\State Extend $\varphi$ to a group homomorphism $G \to \GL(m;\Z) \subseteq \GL(r;\Z)$, by sending the generator of the new $\Z_2$ factor to $-I_{m\times m}$\label{line:extend-phi}
		
		\For{$i=m, m-1,\dotsc, 1$}\label{line:inner-loop}
		\State $O \gets \set{ (\varphi(S) A_j)_{i} \!\mod q | S \in G }$\label{line:orbit}
		\State $u \gets \min O$\label{line:min-orbit}
		\State $\overline S \gets$ any element of $G$ such that $(\varphi(\overline S) A_j)_i \!\mod q = u$\label{line:S-min-orbit}
		\State $A \gets$ Hermite normal form of $A \overline S$\label{line:HNF-inner-loop}
		\State $G \gets \set{S \in G | (\varphi(S)A_j)_i \mmod q = u }$\label{line:restrict-G}
		\State $\varphi \gets \varphi |_G$\label{line:restrict-phi}
		\EndFor\label{line:end-inner-loop}
		\EndFor
		\State \Return $A$
	\end{algorithmic}
\end{algorithm}

Throughout the execution of \Cref{alg:s-normal-form}, $G$ is always a subgroup of $\Z_2^n$.
It would require exponential time and space to compute and store the list of all its elements.
For this reason, we rather describe it by giving one of its $\Z_2$-bases, i.e.\ a list of $k$ linearly independent vectors in $\Z_2^n$ (where $k$ is the dimension of $G$ as a $\Z_2$-vector space).
Accordingly, the group homomorphism $\varphi \colon G \to \GL(r;\Z)$ is always described by giving its values on the $\Z_2$-basis of $G$.

\Cref{alg:s-normal-form} adjusts the columns one at a time, from left to right.
This is possible thanks to the following observation.

\begin{lemma}
	For every $j$ we have $\SHNF(A)_{:j} = \SHNF(A_{:j})$. In particular, the $j$-th column of $\SHNF(A)$ only depends on the first $j$ columns of $A$.
\end{lemma}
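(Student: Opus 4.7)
My plan is to reduce the claim to two observations: first, that the ordinary Hermite normal form restricts well to initial segments of columns, and second, that the lexicographic comparison used to define $\SHNF$ respects a left-to-right ordering of columns, so that minimization over $\Z_2^n$ decomposes into minimization over $\Z_2^j$ followed by minimization over the remaining factor.

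For the first step, I would observe that if $U \in \GL(r;\Z)$ is the (unique) matrix sending $A$ to its Hermite normal form, then the matrix $U A_{:j} = (UA)_{:j}$ automatically inherits all defining HNF properties (upper triangular shape, positivity of pivots, pivots strictly moving right, and the normalization of entries above pivots), since these are conditions that only refer to entries already present in the first $j$ columns. By uniqueness of HNF, this gives $\HNF(A)_{:j} = \HNF(A_{:j})$.

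For the second step, write any $S \in \Z_2^n$ as $S = S' \oplus S''$ with $S' \in \Z_2^j$ acting on the first $j$ columns and $S'' \in \Z_2^{n-j}$ acting on the last $n-j$ columns. Then $(AS)_{:j} = A_{:j} S'$, and applying the previous observation yields
\[ \HNF(AS)_{:j} = \HNF((AS)_{:j}) = \HNF(A_{:j} S'). \]
In particular the first $j$ columns of $\HNF(AS)$ depend only on $S'$, not on $S''$.

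Finally, since lexicographic comparison in the definition of $\SHNF$ reads columns from left to right, the minimum over $S \in \Z_2^n$ of $\HNF(AS)$ is achieved by a sign choice whose first $j$-column restriction $S'$ already minimizes $\HNF(A_{:j}S')$ over $S' \in \Z_2^j$; any such $S'$ can be completed to an $S \in \Z_2^n$ by choosing $S''$ arbitrarily. Hence $\SHNF(A)_{:j} = \HNF(A_{:j} S') = \SHNF(A_{:j})$, as required. I do not foresee a serious obstacle here — the only subtle point is making sure the lex order really does compare strictly left-to-right (confirmed by the definition, which reads columns left to right and entries bottom-to-top within each column), so that the first $j$ columns can be optimized independently of the rest.
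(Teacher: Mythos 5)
Your proof is correct and takes essentially the same approach as the paper: establish $\HNF(A)_{:j} = \HNF(A_{:j})$ (which the paper simply cites as well known), observe that $\HNF(AS)_{:j} = \HNF(A_{:j}S')$ depends only on the restriction $S'$ of $S$ to $\Z_2^j$, and then invoke the left-to-right lexicographic order to decouple the minimization over the first $j$ columns from the rest. The extra detail you supply for the "well known" restriction property of $\HNF$ (checking that the HNF defining conditions, including the staircase/zero-row placement, are inherited by an initial column block) is a sound elaboration of a step the paper leaves implicit.
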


\begin{proof}
	It is well known that $\HNF(A)_{:j} = \HNF(A_{:j})$.
	Therefore
	\begin{align*}
	\SHNF(A)_{:j} &= \min \set{\HNF(AS) \mid S \in \Z_2^n}_{:j} \\
	&= \min \set{\HNF(AS)_{:j} \mid S \in \Z_2^n} \\
	&= \min \set{\HNF((AS)_{:j}) \mid S \in \Z_2^n} \\
	&= \min \set{\HNF(A_{:j}T) \mid T \in \Z_2^j} \\
	&= \SHNF(A_{:j}).
	\end{align*}
	In the second equality we used the fact that the lexicographic order privileges the first $j$ columns over the last $n-j$.
\end{proof}

Let $j$ be the current column (\cref{line:outer-loop}).
At the beginning of each iteration of the outer for loop, the following properties hold:
\begin{enumerate}[(i)]
	\item $A_{:j-1}$ is in signed Hermite normal form;
	\item $A_{:j}$ is in Hermite normal form;
	\item $G = \Stab(A_{:j-1}) \subseteq \Z_2^{j-1}$.
\end{enumerate}
The proof is by induction: for $j=1$ this is trivial; the induction step is given by \Cref{rmk:outer-loop}.
Notice that $\Stab(A_{:j-1})$ describes the freedom we still have in changing the sign of the first $j-1$ columns, without affecting the Hermite normal form of $A_{:j-1}$.

Let $m = \rk(A_{:j-1})$ (\cref{line:rank}) and $q = A_{m+1, j}$ (\cref{line:pivot}).
Here $q \geq 0$, and if $q \neq 0$ then $q$ is a pivot of the Hermite normal form $A_{:j}$.
The matrix $A_{:j}$ looks like this:
\begin{equation}\label{eq:Aj}
A_{:j} = 
\left(
\begin{array}{c|c}\\[-5pt]
\;\;B\;\;\; & v \\[5pt]
\hline
& q \\
\cline{2-2}
\multicolumn{1}{c}{\;\;0\;\;\;} & \multicolumn{1}{c}{} \\[5pt]
\end{array}
\right)
\end{equation}
where $B \in \M(m, j-1; \Z)$, and $v$ is a column vector in $\Z^m$.

In \cref{line:phi}, we consider the group homomorphism $\varphi \colon G \to \GL(m;\Z)$ defined as follows: $\varphi(S)$ is the unique matrix in $\GL(m;\Z) \subseteq \GL(r;\Z)$ such that $\varphi(S) A_{:j-1}S = A_{:j-1}$.
Since $A_{:j-1}$ has zeros in the last $r-m$ rows, this condition is equivalent to $\varphi(S)BS = B$.
As we said before, we describe $\varphi$ by computing the image of each element $S$ of the $\Z_2$-basis of $G$.
This is done by running the algorithm for the Hermite normal form of $BS$: this algorithm returns both the Hermite normal form (which we already know to be equal to $B$) and a matrix $\varphi(S) \in \GL(m;\Z)$ such that $\varphi(S) B S = B$.
The matrix $\varphi(S)$ is unique because $B$ has rank $m$.
Notice that $\varphi$ is indeed a group homomorphism, because
\begin{align*}
\varphi(ST)BST &= B \\
& = \varphi(S)BS \\
& = \varphi(S)\varphi(T)BTS \\
& = \varphi(S)\varphi(T)BST.
\end{align*}

In \cref{line:augment-G}, we replace $G$ with $G \times \Z_2$, where $\Z_2$ is the $j$-th factor of $\Z_2^n$.
This is achieved by extending the basis of $G$ with the $j$-th element of the standard $\Z_2$-basis of $\Z_2^n$.
In \cref{line:extend-phi} we extend $\varphi$ to the new basis element, by sending it to $-I_{m\times m}$.
The definition of $\varphi$ is motivated by \Cref{lemma:phi} below.

Given $x \in \Z$, define $x \mmod q$ as the remainder of the division between $x$ and $q$ if $q > 0$, and define $x \mmod 0 = x$.
Since the group $G$ is going to change throughout the inner loop, it is convenient to denote by $G_0$ the value of $G$ after \cref{line:extend-phi} is executed.

\begin{lemma}\label{lemma:phi}
	Write $A_{:j}$ as in \cref{eq:Aj}.
	Let $\H = \set{ \HNF(A_{:j}S) \mid S \in \Z_2^j} \subseteq \M(r, j, \Z)$, and let $\H' = \set{A' \in \H \mid A'_{:j-1} = A_{:j-1}}$.
	Then the matrices in $\H'$ are precisely those of the form
	\[
	A' =
	\left(
	\begin{array}{c|c}\\[-5pt]
	\;\;B\;\;\; & w \\[5pt]
	\hline
	& q \\
	\cline{2-2}
	\multicolumn{1}{c}{\;\;0\;\;\;} & \multicolumn{1}{c}{} \\[5pt]
	\end{array}
	\right)
	\]
	where $w = \varphi(S)\cdot v$ mod $q$ for some $S \in G_0$.
\end{lemma}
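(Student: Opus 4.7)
The plan is to parameterize $\H'$ by $G_0$ and, for each $S \in G_0$, explicitly compute the $j$-th column of $\HNF(A_{:j}S)$ in terms of $\varphi(S)$ and $v$.

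First I would split each $S \in \Z_2^j$ as $S = \tilde S \oplus \sigma$ with $\tilde S \in \Z_2^{j-1}$ and $\sigma \in \Z_2$ controlling the sign of the $j$-th column. Since the Hermite normal form commutes with truncation to column prefixes, $\HNF(A_{:j}S)_{:j-1} = \HNF(A_{:j-1}\tilde S)$; because $A_{:j-1}$ is already in HNF, this prefix equals $A_{:j-1}$ exactly when $\tilde S \in \Stab(A_{:j-1}) = G$, and hence $\H' = \set{\HNF(A_{:j}S) \mid S \in G_0}$. Then, for each such $S$, I would classify the matrices $U \in \GL(r;\Z)$ that bring $A_{:j}S$ into HNF while preserving the prefix $A_{:j-1}$. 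The condition $UA_{:j-1}\tilde S = A_{:j-1}$, together with $\rk(B) = m$ and the vanishing of the last $r-m$ rows of $A_{:j-1}$, forces $U$ into the block form $U = \blockmatrix{\varphi(\tilde S)}{U_{12}}{0}{U_{22}}$ with $U_{12} \in \M(m, r-m; \Z)$ arbitrary and $U_{22} \in \GL(r-m; \Z)$ arbitrary.

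Applying such a $U$ to the $j$-th column of $A_{:j}S$, whose top $m$ entries are $\sigma v$, whose $(m{+}1)$-th entry is $\sigma q$, and whose remaining entries vanish, produces a column whose top $m$ entries are $\sigma\varphi(\tilde S)v + \sigma q\, u_1$ and whose bottom $r-m$ entries are $\sigma q\, u_2$, where $u_1, u_2$ denote the first columns of $U_{12}, U_{22}$. To land in HNF, I would pick $u_2 = \sigma(1,0,\dots,0)^\top$ so that the bottom block becomes $(q,0,\dots,0)^\top$ (extendable to any $U_{22} \in \GL(r-m;\Z)$), and then choose $u_1$ so that the top block lies in $[0,q)^m$; the top block then becomes $\sigma\varphi(\tilde S)v$ reduced modulo $q$. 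By the extension rule that sends the new $\Z_2$-generator to $-I_{m\times m}$, one has $\sigma\varphi(\tilde S) = \varphi(S)$, so this top block is $\varphi(S)v \mmod q$, as claimed. The case $q = 0$ is handled uniformly by the convention $x \mmod 0 = x$: the bottom block vanishes automatically and no reduction occurs on top.

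Conversely, the same construction gives, for every $S \in G_0$, a choice of $U$ producing a matrix in $\H'$ of the stated form, so the two sets coincide. The main obstacle I anticipate is the block-triangular form of $U$ in the second step: it relies on $\rk(B) = m$ to force the lower-left block to vanish and on the uniqueness of the integer matrix $\varphi(\tilde S)$ satisfying $\varphi(\tilde S) B \tilde S = B$ to pin down the upper-left block. Once that is established, everything else is a direct matrix computation.
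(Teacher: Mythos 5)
Your proof is correct and follows essentially the same route as the paper: decompose $S$ into its $\Z_2^{j-1}$-part and its $j$-th-coordinate sign, use prefix-compatibility of HNF to force the first part into $\Stab(A_{:j-1}) = G$, and track the resulting transformation through $\varphi$. The only difference is presentational: you spell out the block-triangular structure of the change-of-basis matrix $U$ (using $\rk B = m$ to kill the lower-left block and uniqueness of $\varphi(\tilde S)$ to pin the upper-left one), whereas the paper leaves the reduction to HNF implicit by arguing at the level of $\GL(r;\Z)$-orbits.
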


\begin{proof}
	Denote by $S_j$ the $j$-th element of the standard $\Z_2$-basis of $\Z_2^n$.
	Then $S_j$ acts on $\M(r,n;\Z)$ by changing the sign of the $j$-th column.
	Every element of $G_0$ is of the form $S_0S_j^\epsilon$ for some $S_0 \in \Stab (A_{:j-1})$ and $\epsilon \in \set{0,1}$.
	
	We first show that a matrix $A'$ as above (for a given $S \in G_0$) belongs to $\H'$.
	Notice that $A'$ is in Hermite normal form, and $A'_{:j-1} = A_{:j-1}$, so it is enough to show that $A'$ is in the same orbit as $A_{:j} S$ with respect to the left action of $\GL(r;\Z)$.
	Write $S = S_0S_j^\epsilon$.
	Then, by definition of $\varphi$, we have
	\[
	\varphi(S_0) A_{:j} S_0 S_j^\epsilon =
	\left(
	\begin{array}{c|c}\\[-5pt]
	\;\;B\;\;\; & \varphi(S_0) \cdot v \\[5pt]
	\hline
	& q \\
	\cline{2-2}
	\multicolumn{1}{c}{\;\;0\;\;\;} & \multicolumn{1}{c}{} \\[5pt]
	\end{array}
	\right) \cdot S_j^\epsilon
	=
	\left(
	\begin{array}{c|c}\\[-5pt]
	\;\;B\;\;\; & \varphi(S) \cdot v \\[5pt]
	\hline
	& (-1)^\epsilon q \\
	\cline{2-2}
	\multicolumn{1}{c}{\;\;0\;\;\;} & \multicolumn{1}{c}{} \\[5pt]
	\end{array}
	\right),
	\]
	and this matrix is in the same orbit as $A'$.
	
	Conversely, let $A' \in \H'$. Since $A' \in \H$, we have $A' = \HNF(A_{:j}S)$ for some $S \in \Z_2^j$.
	Write $S = S_0 S_j^\epsilon$ for some $S_0 \in \Z_2^{j-1}$ and $\epsilon \in \set{0,1}$.
	We have $A'_{:j-1} = \HNF(A_{:j}S)_{:j-1} = \HNF(A_{:j-1} S_0)$.
	Since $A' \in \H'$, we also have $A'_{:j-1} = A_{:j-1}$. Therefore $\HNF(A_{:j-1}S_0) = A_{:j-1}$, which implies that $S_0 \in \Stab (A_{:j-1})$.
	Then $S = S_0 S_j^\epsilon \in G_0$.
	We conclude by noticing that $A' = \HNF(A_{:j}S)$ is of the form given in the statement, with $w = \varphi(S)\cdot v \mmod q$.
\end{proof}

\Cref{lemma:phi} gives an explicit characterization of the possible values of the $j$-th column of the Hermite normal form of $AS$, provided that $\HNF(AS)_{:j-1} = A_{:j-1}$.
Then, to compute the $j$-th column of the signed Hermite normal form, we need to find the lexicographically minimal vector $w = \varphi(S) \cdot v$ mod $q$.
This is done in the inner loop, starting from the $m$-th row and going up to the first row (lines \ref{line:inner-loop}-\ref{line:end-inner-loop}).

Let $i$ be the current row (\cref{line:inner-loop}).
At the beginning of each iteration of the inner loop, we have that
\begin{equation}
G = \set{S \in G_0 \mid (\varphi(S)A_j)_{i'} \mmod q = A_{i',j} \text{ for all } i'>i }.
\label{eq:subgroup}
\end{equation}
This is proved by induction: it holds at the beginning of the first iteration ($i=m$) because $G=G_0$; the induction step is proved below.

In \cref{line:orbit} we explicitly compute the set $O$ of all possible values of the entry $(i,j)$.
For ease of notation, write $A_{:j}$ as in \cref{eq:Aj}.
Then
\begin{align*}
O &= \set{ (\varphi(S) A_j)_i \!\!\!\mod q \mid S \in G } \\
&= \set{ (\varphi(S) \cdot v)_i \!\!\!\mod q \mid S \in G }.
\end{align*}
A key observation is that the set $O$ is very small, even if $G$ can be large.

\begin{lemma}
	In \cref{line:orbit}, $\lvert O \rvert \in \set{1,2,4}$.
\end{lemma}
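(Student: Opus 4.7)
The plan is to reduce the statement to a $1$-cocycle computation on the abelian exponent-$2$ group $\Phi := \varphi(G) \subseteq \GL(m;\Z)$. The first key ingredient is that every $\varphi(S)$ is upper triangular. Let $p_1 < p_2 < \cdots < p_m$ be the pivot columns of the HNF matrix $A_{:j-1}$, and let $\tilde B \in \M(m,m;\Z)$ be the submatrix of $B$ formed by these columns: it is upper triangular with positive diagonal $(q_1,\ldots,q_m)$. For $S\in\Stab(A_{:j-1})$, the identity $\varphi(S)BS=B$ (together with $S^2=I$) applied to pivot columns gives $\varphi(S)\tilde B=\tilde B\cdot\mathrm{diag}(s_{p_1},\ldots,s_{p_m})$, so $\varphi(S)=\tilde B\cdot\mathrm{diag}(s_{p_\ell})\cdot\tilde B^{-1}$ is upper triangular with diagonal entries $(s_{p_1},\ldots,s_{p_m})$; since $\varphi(S_j)=-I$ is also upper triangular, the same holds for every $S\in G_0$, and the assignment $\phi\mapsto\phi_{\ell\ell}$ defines a group homomorphism $\sigma_\ell\colon\Phi_0\to\{\pm 1\}$.

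Next I verify that $\Phi$ is a subgroup of $\Phi_0$. Write $v\in\Z^m$ for the first $m$ entries of the current $j$-th column $A_j$; the HNF updates in previous inner-loop iterations ensure $v_{i'}=u_{i'}$ for $i'>i$, so by \eqref{eq:subgroup}
\[
\Phi=\{\phi\in\Phi_0 \mid (\phi v)_{i'}\equiv u_{i'}\pmod q \text{ for all } i'>i\}.
\]
For $\phi,\phi'\in\Phi$, setting $w=\phi'v$, upper triangularity of $\phi$ together with $w_\ell\equiv v_\ell\pmod q$ for $\ell>i$ gives
\[
(\phi\phi' v)_{i'}=\sum_{\ell\geq i'}\phi_{i',\ell}\,w_\ell\equiv\sum_{\ell\geq i'}\phi_{i',\ell}\,v_\ell=(\phi v)_{i'}\equiv u_{i'}\pmod q
\]
for every $i'>i$, so $\phi\phi'\in\Phi$; closure under inversion is automatic since $\phi^{-1}=\phi$.

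Now define $\mu\colon\Phi\to\Z/q\Z$ by $\mu(\phi)=(\phi v)_i\bmod q$, so that $O=\mu(\Phi)$, and let $\tilde\mu(\phi)=\mu(\phi)-v_i$. Expanding $(\phi\phi'v)_i$ as above, separating the diagonal contribution $\sigma_i(\phi)\mu(\phi')$ and using $(\phi'v)_\ell\equiv v_\ell\pmod q$ for $\ell>i$, yields the cocycle identity
\[
\tilde\mu(\phi\phi')\equiv\tilde\mu(\phi)+\sigma_i(\phi)\,\tilde\mu(\phi')\pmod q;
\]
i.e.\ $\tilde\mu$ is a $1$-cocycle for the action of $\Phi$ on $\Z/q\Z$ through the character $\sigma_i\colon\Phi\to\{\pm 1\}$.

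Finally, let $\Phi^+=\ker(\sigma_i|_\Phi)$, a subgroup of index $1$ or $2$. On $\Phi^+$ the cocycle identity collapses to an honest homomorphism $\Phi^+\to\Z/q\Z$, and since $\Phi^+$ has exponent $2$ its image lies in the $2$-torsion of $\Z/q\Z$, which has at most $2$ elements, so $|\tilde\mu(\Phi^+)|\leq 2$. If $\Phi=\Phi^+$ this already gives $|O|\leq 2$; otherwise, fix $\phi_0\in\Phi\setminus\Phi^+$ and use $-x\equiv x$ on the $2$-torsion to obtain $\tilde\mu(\phi_0\phi^+)=\tilde\mu(\phi_0)-\tilde\mu(\phi^+)=\tilde\mu(\phi_0)+\tilde\mu(\phi^+)$ for all $\phi^+\in\Phi^+$, so $\tilde\mu(\Phi)$ is a union of at most two cosets of $\tilde\mu(\Phi^+)$ in $\Z/q\Z$, yielding $|O|\leq 4$ and leaving only $|O|\in\{1,2,4\}$. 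The key conceptual move is recognizing the twisted $1$-cocycle structure; the main technical hurdle is establishing the upper-triangular shape of $\varphi(S)$, which is what makes the constraints from higher rows decouple cleanly from the row-$i$ computation.
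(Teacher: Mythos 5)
Your proof is correct, and the core mechanism matches the paper's: both arguments reduce to the observation that $\varphi(S)$ is upper triangular with $\pm 1$ diagonal, so $G$ acts on the $(i,j)$ entry through affine maps $x\mapsto \pm x+\beta$ of $\Z_q$, and the elementary-abelian-$2$ structure of $G$ then forces an orbit of size dividing $4$. The paper packages this concretely by defining $\rho\colon G\to\Aff(\Z_q)$, classifying the involutions $x\mapsto x$, $x\mapsto x+q/2$, $x\mapsto -x+\beta$, and using commutativity to see $\rho(G)$ embeds in $\Z_2^2$, whence the orbit size divides $4$. You instead recast the translation part $\tilde\mu$ as a $\sigma_i$-twisted $1$-cocycle, note that on $\Phi^+=\ker\sigma_i$ it is a genuine homomorphism into the $2$-torsion of $\Z/q\Z$, and finish by observing that $\tilde\mu(\Phi)$ is a union of at most two cosets of the subgroup $\tilde\mu(\Phi^+)$. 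This is the same dichotomy dressed in cocycle language; a minor bonus of your version is that it never explicitly invokes commutativity of $\rho(G)$, and you give the upper-triangularity of $\varphi(S)$ a cleaner justification via the pivot submatrix $\tilde B$ (the paper asserts it with only a parenthetical appeal to eq.~\eqref{eq:subgroup}). Your preliminary ``verification that $\Phi$ is a subgroup'' is redundant, since $\Phi=\varphi(G)$ is automatically a subgroup once one knows $G$ is, but it is harmless.
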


\begin{proof}
	By eq.\ \eqref{eq:subgroup} and since $\varphi(S)$ is upper triangular, 
	there is a well-defined action of $G$ on $\Z_q$: an element $S \in G$ acts as an affine automorphism $\rho(S) \in \Aff(\Z_q)$ given by
	\[ x \;\mapsto\; \varphi(S)_{i,i} \, x + \sum_{k=i+1}^{r} \varphi(S)_{i,k} A_{k,j}. \]
	This is how $G$ acts on the entry $(i,j)$ of $A$.
	Notice that $\rho(S)$ has the form $x \mapsto \pm x + \beta$ for some $\beta \in \Z_q$, since $\varphi(S)_{i,i} = \pm 1$.
	In addition, $\rho(S)$ is an involution, so it has one of the following forms:
	\[ x \mapsto x, \quad x \mapsto x + q/2 \,\text{ (if $q$ is even)}, \quad x \mapsto -x + \beta \text{ for some } \beta \in \Z_q. \]
	The maps $x \mapsto x$ and $x \mapsto x + q/2$ commute with each other and with any map of the form $x \mapsto -x+\beta$.
	However, given two maps of the form $x \mapsto -x + \beta_1$ and $x \mapsto -x+\beta_2$, they commute if and only if $2(\beta_1 - \beta_2) = 0$, i.e.\ $\beta_1 = \beta_2$ or $\beta_1 = \beta_2 + q/2$.
	Since $\rho(G)$ is abelian, there exists a $\beta \in \Z_q$ such that any element $S \in G$ acts as one of the following four maps:
	\[ x \mapsto x, \quad x \mapsto x + q/2, \quad x \mapsto -x + \beta, \quad x \mapsto -x + \beta + q/2. \]
	Therefore $\rho(G)$ is isomorphic to a subgroup of $\Z_2^2$.
	By definition, $O$ is the orbit of $A_{i,j}$ in $\Z_q$, and so its cardinality divides $4$.
\end{proof}

In \cref{line:min-orbit}, we select the smallest element $u \in O$.
In \cref{line:S-min-orbit}, we choose any element $\overline S \in G$ such that $\rho(\overline S)(A_{i,j}) = u$.
After \cref{line:HNF-inner-loop}, the entry $(i,j)$ of $A$ is equal to $u$.
Finally, in \cref{line:restrict-G} we update the group $G$ in order to satisfy eq.\ \eqref{eq:subgroup}, and in \cref{line:restrict-phi} we restrict $\varphi$ to the new group $G$.

\begin{remark}
	\label{rmk:outer-loop}
	At the end of the inner loop (after \cref{line:end-inner-loop}), we have that: $G = \Stab(A_{:j})$, by eq.\ \eqref{eq:subgroup} for $i=0$;
	$A$ is in Hermite normal form, by \cref{line:HNF-inner-loop}, so in particular $A_{:j+1}$ is in Hermite normal form;
	$A_{:j}$ is in signed Hermite normal form, by construction.
\end{remark}

\begin{example}
	Consider the following $3\times 3$ matrix:
	\[
	A = 
	\begin{pmatrix}
	1 & 1 & 4 \\
	0 & 2 & 3 \\
	0 & 0 & 6
	\end{pmatrix}.
	\]
	The first two columns are already in signed Hermite normal form.
	When \Cref{alg:s-normal-form} encounters the third column ($j=3$), the second entry ($i=2$) is already minimal.
	At the beginning of the last iteration ($i=1$) of the inner loop, we have $G = \Z_2^3$.
	The three standard generators of $G$ act on $\Z_6$ as $x \mapsto -x+3$, $x \mapsto x+3$, and $x \mapsto -x$.
	Then $\rho(G)$ is a subgroup of $\Aff(\Z_6)$ isomorphic to $\Z_2^2$.
	On \cref{line:orbit}, we have $O = \set{1,2,4,5}$.
	By choosing $\overline S$ as the second standard generator of $\Z_2^3$, we obtain
	\[
	\varphi(\overline S) A \overline S = 
	\begin{pmatrix}
	1 & 1 & 1 \\
	0 & 2 & 3 \\
	0 & 0 & 6
	\end{pmatrix},
	\]
	which is the signed Hermite normal form.
\end{example}

\begin{proposition}
	\label{prop:complexity}
	The running time of \Cref{alg:s-normal-form} is $O(r^{\theta-1}n^2(r^2+n))$, where $\theta$ is such that two $r\times r$ integer matrices can be multiplied in time $O(r^\theta)$.
\end{proposition}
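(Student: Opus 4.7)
The plan is to bound the running time line by line and accumulate the cost through the outer and inner loops. The outer loop runs $n$ times, so I focus on a single outer iteration with index $j$. I would rely on three facts: the $\Z_2$-basis of $G$ has size at most $j \leq n$ (since $G \subseteq \Z_2^j$); the matrix $A_{:j-1}$ is unchanged during the inner loop, so $\varphi$ need only be set up once per outer iteration and its values on the old basis elements remain valid throughout that inner loop; and the Hermite normal form of an $r \times n$ integer matrix together with its transformation matrix $U \in \GL(r;\Z)$ can be computed in $O(r^{\theta-1}(r+n))$ arithmetic operations using fast matrix multiplication.

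The first main cost within each outer iteration comes from \Cref{line:phi}, where $\varphi$ must be evaluated on each of the at most $n$ basis elements of $G$; each evaluation is an HNF computation on an $r \times (j-1)$ matrix, so this step contributes $O(r^{\theta-1} n (r+n))$ per outer iteration, summing to $O(r^{\theta-1} n^2 (r+n))$ across the outer loop. The second main cost is the inner loop, which runs at most $r$ times. Each inner iteration performs: a constant number of HNF computations at cost $O(r^{\theta-1}(r+n))$; the evaluation of the action $\rho(S)$ on each basis element of $G$ to compute the (size at most $4$) orbit $O$, which costs $O(n r^2)$ using the precomputed matrices $\varphi(S)$; and a restriction of $G$ via $\Z_2$-linear algebra followed by updating $\varphi$ on the new basis through at most $O(n)$ matrix multiplications of size $r \times r$, at total cost $O(n r^\theta)$. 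Aggregating, each inner iteration costs $O(n r^\theta)$, so the inner loop costs $O(n r^{\theta+1})$ per outer iteration and contributes $O(n^2 r^{\theta+1})$ in total.

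Summing the two dominant contributions yields $O(r^{\theta-1} n^2 (r+n)) + O(n^2 r^{\theta+1}) = O(r^{\theta-1} n^2 (r^2 + n))$, which is the claimed bound. The main obstacle I anticipate is bookkeeping: since $|G|$ may be exponential in $n$, every operation must be expressed in terms of a $\Z_2$-basis of size at most $n$, and one must verify that updating the matrices $\varphi(S)$ while successively restricting $G$ at \Cref{line:restrict-G} stays within the $O(n r^\theta)$ budget per inner iteration. One also needs to check that $\varphi$ is well-defined at \Cref{line:phi}, which follows because $B$ has full row rank $m$ and hence the equation $\varphi(S) B S = B$ admits a unique solution in $\GL(m;\Z)$.
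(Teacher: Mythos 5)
Your proof is correct and follows essentially the same approach as the paper: bound the cost of each line in terms of the HNF computation cost $O(r^{\theta-1}n)$ and the $\Z_2$-basis size $O(n)$, multiply by the number of iterations, and observe that the dominant terms are $O(r^{\theta-1}n^3)$ from \cref{line:phi} and $O(r^{\theta+1}n^2)$ from \cref{line:restrict-G,line:restrict-phi}. The only cosmetic differences are that you write the HNF cost as $O(r^{\theta-1}(r+n))$ instead of $O(r^{\theta-1}n)$ and give a slightly looser (but still dominated) bound for \cref{line:orbit}; neither affects the final tally.
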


\begin{proof}
	We assume that all operations between integers require time $O(1)$.
	Then the algorithm of \cite{storjohann1996asymptotically} allows to compute the Hermite normal form $H$ of a matrix $A \in \M(r,n;\Z)$ in time $O(r^{\theta-1}n)$.
	This algorithm also returns a matrix $U$ such that $H = UA$.
	Notice that the best exponent $\theta$ is between $2$ and $2.3728639$, by \cite{le2014powers}.
	
	Throughout the execution of \Cref{alg:s-normal-form}, the group $G$ is always described by one of its $\Z_2$-bases.
	Since the $\Z_2$-dimension of $G$ increases at most by $1$ at each iteration of the outer loop (\cref{line:augment-G}), it is always bounded by $n$.
	
	The most expensive operations of \Cref{alg:s-normal-form} are the following.
	\Cref{line:phi} requires $O(n)$ computations of Hermite normal forms, and is executed $n$ times (because it is inside the outer loop), so it takes $O(r^{\theta-1} n^3)$ time.
	\Cref{line:orbit} requires to compute $O(n)$ elements of $\Aff(\Z_q)$, each of them being obtained as a dot product between two vectors of size $r$.
	It is executed $rn$ times (because it is inside the inner loop), so it takes $O(r^2n^2)$ time.
	\Cref{line:HNF-inner-loop} is executed $O(rn)$ times, and thus takes $O(r^\theta n^2)$ time.
	\Cref{line:restrict-G,line:restrict-phi} require to compute $O(n)$ multiplications of $r \times r$ matrices.
	They are executed $O(rn)$ times, so they take $O(r^{\theta+1}n^2)$ time.
	The overall running time is $O(r^{\theta-1}n^3) + O(r^2n^2) + O(r^\theta n^2) + O(r^{\theta+1} n^2) = O(r^{\theta-1}n^2(r^2+n))$.
\end{proof}

Since $\theta < 3$, assuming $r \leq n$ (otherwise some rows of $\HNF(A)$ are zero, and can be ignored), the time complexity of \Cref{alg:s-normal-form} is less than cubic in the input size $rn$.

\section{Decomposition of representable matroids}
\label{sec:decomposition}

Given an arithmetic matroid $M = (E, \rk, m)$, a \emph{decomposition} of $M$ is a partition $E = E_1 \sqcup \dots \sqcup E_k$ of the groundset such that $\rk(X) = \rk(X \cap E_1) + \dots + \rk(X \cap E_k)$ and $m(X) = m(X \cap E_1) \dotsm m(X \cap E_k)$ for every $X \subseteq E$.
An arithmetic matroid is \emph{indecomposable} if it has no non-trivial decompositions.
Notice that, if $M$ is an indecomposable arithmetic matroid, the underlying matroid $\MM = (E, \rk)$ can be decomposable.

The following lemma allows decomposing a represented arithmetic matroid into indecomposable ones, with a simple and fast algorithm.

\begin{lemma}
	Let $M = (E, \rk, m)$ be a torsion-free arithmetic matroid of rank $r$.
	Suppose that the first $r$ elements of the groundset $E$ form a basis $B$ of $M$.
	Let $A \in \M(r,n;\Z)$ be a representation of $M$, where $A$ is in Hermite normal form.
	A partition $E = E_1 \sqcup E_2$ is a decomposition of $M$ if and only if $A_{ij} = 0$ for all $(i,j) \in (B \cap E_1) \times E_2 \cup (B \cap E_2) \times E_1$.
\end{lemma}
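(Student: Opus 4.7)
Write $B = \{1, \dots, r\}$, $B_k = B \cap E_k$ for $k \in \{1,2\}$, and identify each column of $A$ with a vector $v_e \in \Z^r$. In the backward direction, if the stated zero pattern holds then every column $v_e$ has support inside the coordinate subspace $\Z^{B_{k(e)}}$, where $k(e)$ indicates which block contains $e$. Hence $\Z^r = \Z^{B_1} \oplus \Z^{B_2}$ splits so that $\langle v_e \rangle_{e \in E_k}$ embeds into the $k$-th summand, and for every $X \subseteq E$ both $\rk(\langle v_e \rangle_{e \in X})$ and $|\Tor(\Z^r / \langle v_e \rangle_{e \in X})|$ factor multiplicatively over the two summands, yielding the required decomposition identities for $\rk$ and $m$.

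For the forward direction, the decomposition identities imply $\rk(E_1) + \rk(E_2) = r$ and $\rk(B_1) + \rk(B_2) = r$; combined with $\rk(B_k) \leq |B_k|$ and $\rk(B_k) \leq \rk(E_k)$, this forces $\rk(B_k) = \rk(E_k) = |B_k|$, so $B_k$ is a basis of the restriction to $E_k$. It therefore suffices to establish the block structure on the upper-triangular block $T = A_{:r}$: any other column $v_j$ (with $j \in E_k \setminus B$) lies in the $\Q$-span of $\{v_b : b \in B_k\}$, which by the block structure on $T$ equals $\Q^{B_k}$, whence $v_j \in \Q^{B_k} \cap \Z^r = \Z^{B_k}$, giving the required zeros.

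To handle the columns of $T$, I would induct on $j = 1, \dots, r$, fixing $j \in B_1$ (the case $j \in B_2$ being symmetric) and setting $X = \{1,\dots,j\}$, $X_1 = X \cap E_1$, $X_2 = X \cap E_2$. Recall that for an independent set $I$ the multiplicity $m(I)$ equals the gcd of the $|I|\times|I|$ minors of the submatrix $A_I$. The upper-triangular shape of $T$ gives $m(X) = \prod_{i=1}^{j} a_{ii}$, and by the inductive hypothesis the columns indexed by $X_2$ have support contained in $B_2 \cap \{1,\dots,j-1\} = X_2$, whence $m(X_2) = \prod_{i \in X_2} a_{ii}$. A Laplace expansion along column $v_j$, using the HNF zero pattern and the inductive support information, shows that the only nonzero maximal minors of $A_{X_1}$ have row set either $X_1$ (with value $\prod_{i \in X_1} a_{ii}$) or $(X_1 \setminus \{j\}) \cup \{i^{\star}\}$ for some $i^{\star} \in B_2$ (with value $\pm\, a_{i^{\star}, j} \prod_{i \in X_1 \setminus \{j\}} a_{ii}$), giving
\[ m(X_1) = \Bigl(\prod_{i \in X_1 \setminus \{j\}} a_{ii}\Bigr) \cdot \gcd\bigl(a_{jj},\, \{a_{i^{\star}, j} : i^{\star} \in B_2\}\bigr). \]
Comparing with the identity $m(X) = m(X_1)\, m(X_2)$ yields $a_{jj} \mid a_{i^{\star}, j}$ for every $i^{\star} \in B_2$, and the HNF bounds $0 \leq a_{i^{\star}, j} < a_{jj}$ for $i^{\star} < j$ (with $a_{i^{\star}, j} = 0$ automatic for $i^{\star} > j$) then force every such entry to vanish, completing the induction.

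The main obstacle is the Laplace-type enumeration of the nonzero maximal minors of $A_{X_1}$: ruling out the remaining row-set patterns (those involving two or more indices from $B_2$, or those that replace a $B_1$-index different from $j$ by a $B_2$-index) requires a careful case analysis combining the HNF zero pattern with the inductive support information. Everything else reduces to routine manipulation of HNF bounds, determinantal divisors, and the decomposition axioms.
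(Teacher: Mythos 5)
Your proof is correct and follows essentially the same path as the paper's. The backward direction is identical (block-diagonal structure gives the decomposition). For the forward direction you, like the paper, first establish $\rk(B_k)=|B_k|=\rk(E_k)$, reduce the non-basis columns to the rank argument, and then induct over the basis columns using the multiplicativity of $m$ together with the formula $m(I)=\gcd$ of maximal $|I|\times |I|$ minors of $A_I$. The only cosmetic difference is in the induction step: you compute $m(X_1)$ explicitly as $\bigl(\prod_{i\in X_1\setminus\{j\}}a_{ii}\bigr)\cdot\gcd\bigl(a_{jj},\{a_{i^\star,j}\}\bigr)$ by enumerating all nonzero maximal minors, whereas the paper only needs the divisibility of $m(X_2)$ by the single minor on rows $\{i\}\cup(X_2\setminus\{j\})$; both yield $a_{jj}\mid a_{i,j}$ and then the HNF bound $0\le a_{i,j}<a_{jj}$ finishes. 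One reassurance: the case analysis you flag as ``the main obstacle'' is in fact routine. Any row set $R$ with $|R\cap B_2|\ge 2$ gives two rows supported only in the column $j$, hence a zero determinant; any $R$ that drops some $k\in X_1\setminus\{j\}$ forces, after expanding along the lone $B_2$-row, a submatrix in which the row $j$ is zero (upper triangularity below the diagonal), again giving zero; and any row above $j$ in $B_1\setminus X_1$ is entirely zero on the columns $X_1$. So only $R=X_1$ and $R=(X_1\setminus\{j\})\cup\{i^\star\}$ with $i^\star\in B_2$, $i^\star<j$, survive, exactly as you asserted.
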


\begin{proof}
%
	If $A_{i,j} = 0$ for all $(i,j) \in (B \cap E_1) \times E_2 \cup (B \cap E_2) \times E_1$ then, up to a permutation of rows and columns, $A$ is a block diagonal matrix having the columns in $E_1$ in the first block and the columns in $E_2$ in the second block.
	Then $E = E_1 \sqcup E_2$ is a decomposition of $M$.

	Suppose now that $E = E_1 \sqcup E_2$ is a decomposition of $M$.
	We prove the statement by induction on $j$.
	Assume without loss of generality that $j \in E_2$.

	We start with the case $j \in B$.
	We have that
	\[
		\prod_{k=1}^j A_{k,k} = m(\set{1, \dots, j}) = m(\set{1,\dots,j} \cap E_1) \cdot m(\set{1,\dots,j} \cap E_2).
	\]
	By induction, $m(\set{1,\dots,j} \cap E_1) = \prod_{k \in \set{1,\dots,j} \cap E_1} A_{k,k}$.
	In addition, we have $m(\set{1,\dotsc, j} \cap E_2) \mid \det(A')$, where $A'$ is the square submatrix of $A$ consisting of the rows $\set{i} \cup (\set{1,\dots,j-1} \cap E_2)$ and the columns $\set{1,\dots,j} \cap E_2$.
	By induction, $\det(A') = A_{i,j} \cdot \prod_{k \in \set{1,\dots,j-1} \cap E_2} A_{k,k}$.
	Putting everything together, we obtain that $A_{j,j} \mid A_{i,j}$.
	Since $A$ is in Hermite normal form, $A_{i,j} = 0$.
	
	Consider now the case $j \not\in B$.
	The $j$-th column of $A$ is a linear combination of the columns in $B \cap E_2$. Therefore $A_{i,j} = 0$.
\end{proof}

\section{Applications and examples}
\label{sec:applications}

The software library Arithmat \cite{arithmat}, which is publicly available as a Sage package, implements arithmetic matroids, toric arrangements, and some of their most important operations.
The algorithms of this paper are implemented, together with some additional ones such as Lenz's algorithm to compute the poset of layers of a toric arrangements \cite{lenz2017computing}.

As an application of our library and algorithms, we provide some examples of central toric arrangements with a non-shellable (nor Cohen-Macaulay) poset of layers, and a non-shellable (nor Cohen-Macaulay) arithmetic independence poset.
This disproves two popular conjectures in the community of arrangements and matroids.

\begin{definition}[Poset of layers]
	The poset of layers of a toric arrangement $\A$ is the set of connected components of intersections of elements of $\A$, ordered by reverse inclusion.
\end{definition}

Posets of layers of toric arrangements associated with root systems were proved to be shellable \cite{delucchi2017shellability, paolini2018shellability}, and this led to the conjecture that posets of layers are always shellable.

A related poset is the \emph{(arithmetic) independence poset} of a toric arrangement, defined in \cite[Definition 5]{lenz2017stanley}, \cite[Section 2]{martino2018face} (under the name of \emph{poset of torsions}), and \cite[Section 7]{d2018stanley} (under the name of \emph{poset of independent sets}).

\begin{definition}[Arithmetic independence poset]
	The arithmetic independence poset of a toric arrangement $\A$ is the set of pairs $(I, W)$ where $I \subseteq \A$ is an independent set and $W$ is a connected component of $\bigcap I$.
	The order relation is defined as follows: $(I_1, W_1) \leq (I_2, W_2)$ if and only if $I_1 \subseteq I_2$ and $W_1 \supseteq W_2$.
\end{definition}

D'Alì and Delucchi proved that both posets are homology Cohen-Macaulay over fields of all but a finite number of characteristics \cite{d2018stanley}.
It was conjectured that the arithmetic independence poset is shellable.
Notice that the non-arithmetic versions of these posets (the poset of flats and the independence poset of an ordinary matroid) are shellable, and therefore Cohen-Macaulay over fields of every characteristic.

Consider the example of \cite[Section 3]{Pagaria2018}: let $M$ be the arithmetic matroid associated with the matrix
\[
	A = \begin{pmatrix*}[c]
		1 & 1 & 1 & -3 \\
		0 & 5 & 0 & -5 \\
		0 & 0 & 5 & -5
	\end{pmatrix*}.
\]
Using the algorithm of \Cref{sec:representations}, we find that $M$ has $13$ non-equivalent essential representation.
These $13$ representations give rise to $3$ non-isomorphic posets of layers.
These $3$ posets are realized by the matrices $A$ and
\[
A' = \begin{pmatrix*}[c]
1 & 1 & 1 & -1 \\
0 & 5 & 0 & 5 \\
0 & 0 & 5 & -5
\end{pmatrix*}, \quad
A'' = \begin{pmatrix*}[c]
1 & 2 & 2 & 1 \\
0 & 5 & 0 & 5 \\
0 & 0 & 5 & -5
\end{pmatrix*}.
\]
The matrices $A, A', A''$ are given in signed Hermite normal form (see \Cref{sec:shnf}).
The fact that $A$ and $A'$ give rise to non-isomorphic posets of layers was already proved by the first author in \cite{Pagaria2018}.

The homology of the order complex of the poset of layers (with the bottom element removed) is equal to $(0, \Z_5, \Z^{48})$ in all $3$ cases.
In particular, these posets of layers are not Cohen-Macaulay over fields of characteristic $5$ and therefore are not shellable.

The arithmetic independence posets of the $13$ representations of $M$ are pairwise isomorphic.
Their order complexes (with the bottom element removed) have homology $(0, \Z_5, \Z^{73})$.
Therefore these posets are not Cohen-Macaulay in characteristic $5$ and are not shellable.
Our computations settle some different conjectures about the posets associated with a toric arrangement, but also highlight the following problem.

\begin{question}
	Let $M$ be an arithmetic matroid.
	Are the arithmetic independence posets of the representations of $M$ always pairwise isomorphic? 
\end{question}

\nocite{*}
\bibliographystyle{amsalpha-abbr}
\bibliography{bibliography}

\bigskip\bigskip 

\end{document}